\documentclass[12pt]{amsart}
\usepackage{graphicx} 
\usepackage{todonotes,comment,hyperref}

\usepackage[dvipsnames]{xcolor}

\usepackage{soul}

\title{Non-expansive matrix based number systems}
\author{Adam Blažek}
\address{FNSPE, Czech Technical University, Prague, Czech Republic}
\email{\href{mailto:blazead5@cvut.cz}{blazead5@cvut.cz}}
\author{Kevin G. Hare}
\address{University of Waterloo, Department of Pure Mathematics, Canada.}
\email{\href{mailto:kghare@uwaterloo.ca}{kghare@uwaterloo.ca}}
\thanks{Research of K. G. Hare is supported, in part, by NSERC Grant 2025-03965.}
\author{Edita Pelantová}
\address{FNSPE, Czech Technical University, Prague, Czech Republic}
\email{\href{edita.pelantova@fjfi.cvut.cz}{edita.pelantova@fjfi.cvut.cz}}

\begin{document}

\def\N{\mathbb N}
\def\Z{\mathbb Z}
\def\R{\mathbb R}
\def\C{\mathbb C}

\def\A{\mathcal A}
\def\B{\mathcal B}
\def\CC{\mathcal C}
\def\D{\mathcal D}
\def\P{\mathcal P}
\def\coloneqq{:=}

\def\ii{\imath}

\def\G{\widetilde{G}}
\def\GG{\widetilde{\Gamma}}




\newtheorem{thm}{Theorem}[section]
\newtheorem{theorem}[thm]{Theorem}
\newtheorem{coro}[thm]{Corollary}
\newtheorem{corollary}[thm]{Corollary}
\newtheorem{lemma}[thm]{Lemma}
\newtheorem{cnj}[thm]{Conjecture}
\newtheorem{lmm}[thm]{Lemma}
\newtheorem{claim}[thm]{Claim}
\newtheorem{obs}[thm]{Observation}
\newtheorem{proposition}[thm]{Proposition}
\newtheorem{prop}[thm]{Proposition}
\newtheorem{definition}[thm]{Definition}
\newtheorem{note}[thm]{Notation}

\theoremstyle{remark}
\newtheorem{remark}[thm]{Remark}
\newtheorem{algo}[thm]{Algorithm}
\newtheorem{example}[thm]{Example}

\newcommand{\vect}[2]{\left(\begin{smallmatrix} #1 \\ #2 \end{smallmatrix}\right)}
\newcommand{\vectab}{\vect{a}{b}}
\newcommand{\vectp}{\vect{0}{1}}
\newcommand{\vectm}{\vect{0}{-1}}
\newcommand{\vectz}{\vect{0}{0}}
\newcommand{\wt}{\mathrm{wt}}
\newcommand{\hp}{h^{+}}
\newcommand{\hm}{h^{-}}

\begin{abstract}
An integer $d$-dimensional vector is usually represented by $d$ strings, which express the value of each component separately.
This article is devoted to number systems that allow representing integer vectors using a single string of digits.
This system is given by an integer square matrix $M$ and a finite set  $\mathcal{D}$ of symbols, called digits.  We focus on full number systems, in which every vector  $\vec{x} \in \mathbb{Z}^n$ can be  written in the form $\vec{x}=\sum_{i=0}^{k-1} M^i d_i$  with $d_i \in \D$ and  thus  represented by the string $d_{k-1}d_{k-2}\cdots d_1d_0$. 
In contrast to the very well-described case where the base $M$ is expansive, we focus on systems in which  $M$  is similar to a Jordan block with the eigenvalue $\pm 1$.  
We follow the work of Caldwell, Hare,  and V\'avra who initiated the study of such systems. First, we address the question of choosing a minimal sized digit set $\mathcal{D}$.  Then we describe optimal representations for vectors from $\mathbb{Z}^2$, both with respect to their length and the occurrence of non-zero digits. 
\end{abstract}

\keywords{Numeration Systems, Jordan Blocks}


\maketitle


\section{Introduction}
Let $M$ be an $n \times n$ matrix and $\D$ a finite collection of vectors in $\mathbb{Z}^n$.  The  pair $(M, \D)$ is called {\em number system}.  If every vector  $\vec{x} \in \mathbb{Z}^n$ can be  written in the form 
 \begin{equation}\vec{x}=\sum_{i=0}^{k-1} M^i d_i \label{eq:rep} \text{\ \ with $d_i \in \D$},\end{equation}
 we say that  $(M, \D)$ is a {\em full number system}. The string $d_{k-1}d_{k-2}\cdots d_0$ of digits  is a representation of $\vec{x}$ in the number system.  
    If every element in $\mathbb{Z}^n$ has a unique representation, then $(M, \D)$ is called a {\em canonical number system}.
    
    The beginning of the study of such representations can be traced back to the works of Vince \cite{Vince}.   Vince noted  that $(M, \D)$ can be canonical only if the matrix   $M$ is  expansive, i.e., every eigenvalue of $M$ is in modulus $>1$.  Therefore, he focused on systems with expanding matrices.

A description of the full system in the case when the uniqueness of the representation is not required was provided by Jankauskas and Thuswaldner in \cite{JaTh}. They proved that a matrix $M$ 
can be equipped with a digit set $\D$ such that the pair $(M, \D)$ is  {full number system}    if and only if $M$  has no eigenvalue of absolute value smaller than one. 
The question of when  an expanding matrix $M$  can be equipped by a digit set $\D$  such that vector addition can be performed by a parallel algorithm is solved in  \cite{FaPeSv}. 

The study of the case where $M$ is a non-expansive, non-contractive matrix was initiated in \cite{CHV24}.  
For a matrix  $M$ similar to a Jordan block with $1$ on the diagonal, the authors solved, among other things, the question of minimal digit set  guaranteeing the fullness  of $(M, \D)$.  

Here, we  extend their study to Jordan matrices with $\pm 1$ on the diagonal and also focus on the optimal representation in terms of their length.    
We define $J_n(a)$ as the $n \times n$ Jordan matrix with eigenvalue $a$.
That is, \[J_n(a) \coloneqq \begin{pmatrix}a&1&0&\cdots&0\\0&a&1&\cdots&0\\\vdots&\ddots&\ddots&\ddots&\vdots\\0&\cdots&0&a&1\\0&\cdots&0&0&a \end{pmatrix}.\]
In this paper, we consider only $a = \pm 1$. 

In Section \ref{sec:J1} we consider the case of $J_2(1)$ with the digit set $\{\vectp, \vectm\}$.
This section answers a question raised in \cite{CHV24}. Given the matrix $J_2(1)$ and $\D = \{\vectp, \vectm\}$, what is the minimal length representation of $\vectab$.
Additionally, we give a complete description of the number of representations of $\vectab$ of length $k$ in terms of the coefficients of a simple generating function.

In Section \ref{sec:Jnm1} we show that $J_n(-1)$ along with the vectors $(0, \dots, 0, 1)^T$ and $(0, \dots, 0, 0)^T$ is a full number system.

Section \ref{sec:Jm1} studies $J_2(-1)$ with the digit set $\{\vectp, \vectz\}$.
As a consequence of Section \ref{sec:Jnm1}, this is a full number system.
Analogously to Section \ref{sec:J1}, we find the minimal length representation of $\vectab$, and show how one would construct such a representation.
We define the weight of a representation as the number of non-zero digits.
We show that the minimal weight of $\vectab$ is one of $|b|, |b|+2$ or $|b|+4$.
Lastly, we give a complete description of the number of representations of $\vectab$ in terms of the coefficients of a simple generating function.

In Section \ref{sec:lenghts} we make some observations on the relationship between the length of a representation and the norm of the vector that is represented. 

In the final section, Section \ref{sec:conc}, we make a few concluding remarks and observations.

\section{The Jordan block $J_2(1)$}
\label{sec:J1}
In this section, we consider the case of $J_2(1)$ with digit set $\{\vectp, \vectm\}$.
We let $p = \vectp$ and $m = \vectm$ for ease of notation.
In \cite{CHV24} it was noted that this is a full number system, but not a canonical number system.
In particular, all $\vectab$ have multiple representations.
The question was raised in \cite{CHV24} as to what the minimal length representation of $\vectab$ would be.
This is answered in the first subsection.

In the second subsection, we answer the related question of the number of representations of $\vectab$ of length $k$.

If $d_{k-1} \dots d_0$ is a representation of $\vectab$ then we see 
$\tilde d_{k-1} \dots \tilde d_{0}$ is a representation of $\vect{-a}{-b}$ where $\tilde p = m$ and $\tilde m = p$.
As such, we will assume without loss of generality that $b \geq 0$.

As a first useful lemma, we have:
\begin{lemma} \label{lem:ab1}
    Let $d_{k-1} \dots d_0$ be a representation in the number system $(J_2(1), \{p, m\})$ for $\vectab$. Then
    \begin{align*}
        a & = \sum_{i, d_i = p } i - \sum_{i, d_i = m} i \\
        b & = \# \{i : d_i = p \} - \# \{i : d_i =  m \}. 
    \end{align*}
\end{lemma}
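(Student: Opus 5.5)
The plan is to compute the powers of $J_2(1)$ explicitly and then apply them to the two digits. First I will show by induction on $i \ge 0$ that
\[
J_2(1)^i = \begin{pmatrix} 1 & i \\ 0 & 1 \end{pmatrix}.
\]
The case $i=0$ is the identity matrix. For the inductive step, multiplying $\begin{pmatrix} 1 & i \\ 0 & 1\end{pmatrix}$ by $\begin{pmatrix} 1 & 1\\ 0 & 1\end{pmatrix}$ gives $\begin{pmatrix} 1 & i+1 \\ 0 & 1\end{pmatrix}$. Equivalently, one can write $J_2(1) = I + N$ with $N^2 = 0$ and apply the binomial theorem.

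Next I apply these powers to the digits. This gives $J_2(1)^i p = \vect{i}{1}$ and $J_2(1)^i m = \vect{-i}{-1}$, since multiplying by $\vectp$ picks out the second column of $J_2(1)^i$. In other words, $J_2(1)^i d_i = \varepsilon_i \vect{i}{1}$, where $\varepsilon_i = 1$ if $d_i = p$ and $\varepsilon_i = -1$ if $d_i = m$.

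Substituting this into \eqref{eq:rep} gives
\[
\vectab = \sum_{i=0}^{k-1} \varepsilon_i \vect{i}{1}.
\]
Splitting the sum according to whether $d_i = p$ or $d_i = m$ and reading off the two coordinates yields exactly the two stated formulas. The first coordinate gives the signed sum of the indices. The second coordinate gives the count of digits $p$ minus the count of digits $m$.

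There is no real obstacle here. The only point requiring care is the closed form for $J_2(1)^i$, which the induction above settles.
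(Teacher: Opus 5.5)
Your proposal is correct and matches the paper's approach. The paper states the lemma without proof, but it rests on exactly the identities $J_2(1)^i p = \vect{i}{1}$ and $J_2(1)^i m = \vect{-i}{-1}$, which the paper invokes in its generating-function proof; your closed form for $J_2(1)^i$ and the coordinatewise summation supply the omitted verification.
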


\subsection{The minimal length representation}

 
\begin{remark}  Let us mention some simple properties of representations of vectors $\vectab \in \mathbb{Z}^2$ with  $b\geq 0$. 
\label{rmk:prop}
\begin{enumerate} 
\item If the letter $m$ occurs  $\ell$ times in a representation of   $\vectab$ then  the letter $p$ occurs in the representation  $b+\ell$ times. The length of the representation is $b+2\ell$.  
\label{it:2}
\item The string $p^{b+\ell}m^\ell$ represents the vector $\vectab$ with $a =\tfrac{1}{2}b(b-1) + 2b \ell +\ell^2$. 
\label{it:3}
\item The string $m^\ell p^{b+\ell}$ represents the vector $\vectab$ with $a =\tfrac{1}{2}b(b-1) -\ell^2$. 
\label{it:4}
\item Let a representation of $\vectab$ contain the factor $pm$. If we replace this factor by $mp$, we get a representation of  $\vect{a-2}{b}$. In particular, the parity of the first component of both vectors coincides. 
\label{it:5}
\end{enumerate}
\end{remark} 

\begin{note} Let $b \geq 0$ be fixed. For every $\ell \in \mathbb{N}$ we put $$\mathcal{S}_{b,\ell} = \{a \in \mathbb{Z}: \text{the letter $m$ occurs $\ell$ times in a representation of  } \vectab\}.$$
\end{note}

\begin{lemma} Let $b \geq 0$. Denote $ A_{b,\ell} = \max \mathcal{S}_{b,\ell} $  and $ a_{b,\ell} = \min \mathcal{S}_{b,\ell} $. Then \begin{itemize}
    \item  $ A_{b,\ell} =  \tfrac{1}{2}b(b-1) + 2b \ell +\ell^2$  \ and \ $a_{b,\ell} =  \tfrac{1}{2}b(b-1) -\ell^2. $
    \item The strings $p^{b+\ell}m^\ell$  and $m^\ell p^{b+\ell}$ are the unique optimal representation of the vectors   $\vect{A_{b,\ell}}{b}$  and $\vect{a_{b,\ell}}{b}$, respectively. 
\end{itemize}
\end{lemma}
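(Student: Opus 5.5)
By Remark~\ref{rmk:prop}(\ref{it:2}), every representation of $\vectab$ with exactly $\ell$ occurrences of $m$ has length $k=b+2\ell$. It contains $b+\ell$ copies of $p$. The positions form a partition of $\{0,1,\dots,k-1\}$ into a set $P$ of size $b+\ell$ (positions of $p$) and a set $N$ of size $\ell$ (positions of $m$). By Lemma~\ref{lem:ab1},
\[
a=\sum_{i\in P} i-\sum_{i\in N} i = \sum_{i=0}^{k-1} i - 2\sum_{i\in N} i = \tfrac12 k(k-1)-2\sum_{i\in N} i .
\]
So maximizing (minimizing) $a$ over $\mathcal S_{b,\ell}$ amounts to minimizing (maximizing) $\sigma(N)=\sum_{i\in N} i$ over $\ell$-element subsets $N\subseteq\{0,\dots,k-1\}$.

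The plan has three steps.

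First, observe that $\sigma(N)\ge 0+1+\dots+(\ell-1)=\tfrac12\ell(\ell-1)$. Equality holds if and only if $N=\{0,\dots,\ell-1\}$, because any $\ell$ distinct nonnegative integers, listed in increasing order, satisfy $n_j\ge j$, with equality throughout only for the initial segment. Symmetrically, $\sigma(N)\le \sum_{i=k-\ell}^{k-1} i$, with equality if and only if $N=\{k-\ell,\dots,k-1\}$, since then $n_j\le k-\ell+j$.

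Second, the extremal sets $N=\{0,\dots,\ell-1\}$ and $N=\{k-\ell,\dots,k-1\}$ correspond exactly to the strings $p^{b+\ell}m^\ell$ and $m^\ell p^{b+\ell}$ (recall $d_0$ is the rightmost digit). Their values of $a$ are already computed in Remark~\ref{rmk:prop}(\ref{it:3}) and (\ref{it:4}), which gives the formulas for $A_{b,\ell}$ and $a_{b,\ell}$. One could double-check with $k=b+2\ell$: for instance $\tfrac12 k(k-1)-\ell(\ell-1)=\tfrac12 b(b-1)+2b\ell+\ell^2$.

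Third, the uniqueness claim needs interpretation. I read ``optimal'' as ``having exactly $\ell$ occurrences of $m$'', i.e.\ of length $b+2\ell$. In that case uniqueness follows from the equality cases in the first step: any representation of $\vect{A_{b,\ell}}{b}$ with $\ell$ letters $m$ must have $\sigma(N)$ minimal, hence $N=\{0,\dots,\ell-1\}$.

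The intended reading may instead be minimal length among all representations. Then I also need that no representation with $\ell'<\ell$ letters $m$ represents these vectors. This holds because $A_{b,\ell'}<A_{b,\ell}$ and $a_{b,\ell'}>a_{b,\ell}$, both of which are monotone in $\ell$ by the explicit formulas, using $b\ge0$. Combined with the bounds of the first step, any vector with a shorter representation has first coordinate strictly between these extremes. This monotonicity check is the only point needing care. The main obstacle is just pinning down the meaning of ``optimal''; the mathematics itself is elementary.
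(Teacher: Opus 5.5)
Your proof is correct. It takes a mildly different route from the paper's one-line argument.

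\textbf{Paper's route.} The paper argues by local exchange. By Remark~\ref{rmk:prop}(\ref{it:5}), replacing a factor $pm$ by $mp$ lowers the first coordinate by exactly $2$. Among words with $b+\ell$ letters $p$ and $\ell$ letters $m$, every word other than $p^{b+\ell}m^\ell$ contains a factor $mp$, and every word other than $m^\ell p^{b+\ell}$ contains a factor $pm$. So any non-extremal word can be strictly improved in the relevant direction. The extremal values are then read off from parts (\ref{it:3}) and (\ref{it:4}).

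\textbf{Your route.} You use the global identity $a=\tfrac12k(k-1)-2\sum_{i\in N}i$. This turns the question into the extremal problem for sums of $\ell$-element subsets of $\{0,\dots,k-1\}$, where the equality cases are immediate. The two arguments encode the same fact: a swap moves one $m$ up by one position, changing $\sum_{i\in N}i$ by $1$. Your identity gives the extremal values and uniqueness in one step, and it shows at once that all of $\mathcal S_{b,\ell}$ has a single parity. The swap chain, on the other hand, is what the paper reuses in Lemma~\ref{lem:atMost} to reach every intermediate value of that parity.

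\textbf{On ``optimal''.} The paper uses the word to mean ``shortest'' (see the case $\ell=0$ in the proof of Theorem~\ref{thm:plus1}), so your second reading is the relevant one. The paper's proof leaves implicit exactly the step you flagged. To pass from ``unique among words with $\ell$ letters $m$'' to ``unique shortest representation'', one needs $A_{b,\ell'}<A_{b,\ell}$ and $a_{b,\ell'}>a_{b,\ell}$ for $\ell'<\ell$. The paper only records this afterwards, in Remark~\ref{rem:parity}. Your monotonicity check supplies it; for instance $A_{b,\ell+1}-A_{b,\ell}=2b+2\ell+1>0$, which is where $b\ge 0$ is used.
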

    
\begin{proof} By Remark \ref{rmk:prop}, parts \eqref{it:3}, \eqref{it:4} and \eqref{it:5} and Lemma \ref{lem:ab1}.     
\end{proof}

\begin{remark}\label{rem:parity} 
Fix $b \geq 0$.
The  sequence $\{A_{b,\ell}\}_\ell$ is strictly increasing and the parity in the sequence alternates. 
Similarly,  $\{a_{b,\ell}\}_\ell$ is strictly decreasing and the parity in the sequence alternates.  
\end{remark}

\begin{lemma}\label{lem:atLeast}  Let $b \geq 0$.  Let  $a - \tfrac{1}{2}b(b-1)\equiv \ell \mod 2$ and $\ell\geq 2$.  If  
$A_{b,\ell-2} <a $ or $a< a_{b,\ell-2}$, then  
every representation of $\vectab$ contains at least $\ell$ letters $m$.
\end{lemma}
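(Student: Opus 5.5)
Suppose a representation of $\vectab$ contains exactly $j$ letters $m$; we show $j\ge \ell$. The plan is to combine a parity constraint on $j$ with the monotonicity of the extreme values $A_{b,j}$ and $a_{b,j}$ from Remark~\ref{rem:parity}.

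\emph{Step 1: parity.} By the preceding lemma, the string $p^{b+j}m^j$ represents $\vect{A_{b,j}}{b}$. Every string with $j$ letters $m$ and $b+j$ letters $p$ is obtained from $p^{b+j}m^j$ by a sequence of swaps $pm\to mp$. By Remark~\ref{rmk:prop}\eqref{it:5}, each swap decreases the first component by $2$. Hence
\[
a\equiv A_{b,j} = \tfrac12 b(b-1)+2bj+j^2 \equiv \tfrac12 b(b-1)+j \pmod 2 .
\]
Together with the hypothesis $a-\tfrac12 b(b-1)\equiv \ell \pmod 2$, this gives $j\equiv \ell \pmod 2$.

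\emph{Step 2: range.} Since $a\in\mathcal S_{b,j}$, the preceding lemma gives $a_{b,j}\le a\le A_{b,j}$. Now assume for contradiction that $j<\ell$. By Step~1, $j\le \ell-2$. By Remark~\ref{rem:parity}, the sequence $\{A_{b,\ell}\}_\ell$ is increasing in $\ell$ and $\{a_{b,\ell}\}_\ell$ is decreasing, so
\[
a_{b,\ell-2}\le a_{b,j}\le a\le A_{b,j}\le A_{b,\ell-2}.
\]
This contradicts the hypothesis that $a>A_{b,\ell-2}$ or $a<a_{b,\ell-2}$. Therefore $j\ge \ell$.

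The only step needing care is Step~1, specifically that all strings with a fixed number of $m$'s share a common parity of the first component. The swap argument settles it, and Lemma~\ref{lem:ab1} gives the same conclusion directly: modulo $2$, $a\equiv \sum_i i$ over all positions $i<b+2j$, which depends only on $b$ and $j$.
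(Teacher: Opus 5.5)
Your proof is correct and follows the same route as the paper. The paper's one-line proof also rests on the definitions of $A_{b,\ell}$ and $a_{b,\ell}$ (so that $a_{b,j}\le a\le A_{b,j}$) together with the monotonicity and alternating parity in Remark~\ref{rem:parity}; you additionally spell out the parity step the paper leaves implicit, namely that every element of $\mathcal S_{b,j}$ has the parity of $A_{b,j}$, which forces $j\equiv \ell \pmod 2$ and hence $j\le \ell-2$ whenever $j<\ell$.
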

\begin{proof} This follows from the definition of $A_{b,\ell}, a_{b,\ell}$ and Remark \ref{rem:parity}. 
\end{proof}

\begin{lemma}\label{lem:atMost}   Let $b \geq 0$.    Let $\ell\geq 2$ and   $a - \tfrac{1}{2}b(b-1)\equiv \ell \mod 2$. If  
$A_{b,\ell-2} <a \leq A_{b,\ell}$ or $a_{b,\ell}\leq a< a_{b,\ell-2}$, then $\vectab$ has a representation containing $\ell$ letters $m$.
\end{lemma}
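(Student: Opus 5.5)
The plan is to show that, for fixed $b$ and $\ell$, the set $\mathcal{S}_{b,\ell}$ consists of \emph{all} integers in $[a_{b,\ell},A_{b,\ell}]$ having the parity of $a_{b,\ell}$. Since $a_{b,\ell}=\tfrac12 b(b-1)-\ell^2\equiv \tfrac12 b(b-1)+\ell \pmod 2$, this is exactly the parity class prescribed by the hypothesis $a-\tfrac12 b(b-1)\equiv \ell \pmod 2$. Once this is established, both ranges in the statement, $A_{b,\ell-2}<a\le A_{b,\ell}$ and $a_{b,\ell}\le a<a_{b,\ell-2}$, lie inside $[a_{b,\ell},A_{b,\ell}]$, because $\{A_{b,\ell}\}_\ell$ is increasing and $\{a_{b,\ell}\}_\ell$ is decreasing (Remark~\ref{rem:parity}). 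Hence such an $a$ belongs to $\mathcal{S}_{b,\ell}$, which is what we want.

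To prove that $\mathcal{S}_{b,\ell}$ fills its parity class, work on the set $W$ of words consisting of $b+\ell$ letters $p$ and $\ell$ letters $m$. By Lemma~\ref{lem:ab1} and Remark~\ref{rmk:prop}\eqref{it:2}, every word in $W$ represents a vector with second coordinate $b$. Start from the word $p^{b+\ell}m^\ell$, which represents the first coordinate $A_{b,\ell}$. Then repeatedly replace an occurrence of the factor $pm$ by $mp$. By Remark~\ref{rmk:prop}\eqref{it:5}, each such replacement decreases the first coordinate by exactly $2$.

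As long as the current word is not of the form $m^\ell p^{b+\ell}$, it contains some $p$ that precedes some $m$, and therefore it contains the factor $pm$ somewhere. So a swap is always possible. The process terminates, because the number of inversions (pairs consisting of a $p$ followed later by an $m$) strictly decreases with each swap. It ends at $m^\ell p^{b+\ell}$, which represents $a_{b,\ell}$.

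Along the way, the first coordinate therefore passes through every value $A_{b,\ell}, A_{b,\ell}-2, \dots, a_{b,\ell}$, each realised by a word containing exactly $\ell$ letters $m$. As a consistency check, the number of steps is $(A_{b,\ell}-a_{b,\ell})/2=b\ell+\ell^2=(b+\ell)\ell$, which is the initial inversion count.

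The only delicate point is the parity and range bookkeeping: one must confirm that $a$ lies within $[a_{b,\ell},A_{b,\ell}]$ and has the same parity as $A_{b,\ell}$. Both follow immediately from the formulas in the preceding lemma and from Remark~\ref{rem:parity}. The assumption $\ell\ge2$ plays no role beyond matching the indexing used in Lemma~\ref{lem:atLeast}.
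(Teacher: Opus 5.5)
Your proof is correct and follows essentially the same route as the paper: start from $p^{b+\ell}m^{\ell}$ and apply $pm\to mp$ replacements, each lowering the first coordinate by $2$. Your inversion count ($(b+\ell)\ell$ swaps available, ending at $m^{\ell}p^{b+\ell}$) makes explicit why enough replacements can be performed, which the paper only asserts via the bound $j\le 2b+2\ell-3$. It also handles both ranges at once, by showing that $\mathcal{S}_{b,\ell}$ is the entire parity class in $[a_{b,\ell},A_{b,\ell}]$.
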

\begin{proof} First, we discuss the case $A_{b,\ell-2} < a\leq A_{b,\ell}$. 

Since $A_{b,\ell} -  \tfrac{1}{2}b(b-1)  = 2b\ell +\ell^2$ and  $a - \tfrac{1}{2}b(b-1)\equiv \ell \equiv 2b\ell +\ell^2 \mod 2$ there exist $j \in \mathbb{N}$ such that $a= A_{b,\ell} -2j$. 
As the parity of $A_{b,\ell}$ and $A_{b,\ell-2}$ is the same, $2j \leq A_{b,\ell} - A_{b,\ell-2} - 2 = 4b+4\ell - 6$, i.e., $j \leq 2b+2\ell-3$. Applying $j$ times the replacement $pm$ with $mp$ on the representation $p^{b+\ell}m^{\ell}$ we get, by Remark \ref{rem:parity}, a representation of $\vect{A_{b,\ell} - 2j}{b} = \vectab$. 
Note that it is possible to repeat the replacement $j$ times, as $j$ is bounded by 
$2b+2\ell-3$. 

The case $a_{b,\ell}\leq a< a_{b,\ell-2}$ is similar. 
\end{proof}

\begin{example} The table illustrates for $b=2$ and $\ell=2$ how we get  by the replacements $pm$ with $mp$ in  representation  of $\vect{A_{b,\ell}}{b}$   representations of vectors $\vect{a}{b}$ with 
$A_{b\ell -2}< a\leq A_{b,\ell}$ and $a = A_{b,\ell} \mod 2$.

$$\begin{array}{c|c|c}
w \in \{p,m\}^* & [w]_M\\
\hline
ppppmm & \vect{13}{2}& =\vect{A_{2,2}}{2}\\
pppmpm & \vect{11}{2}&\\
pppmmp & \vect{9}{2}&\\
ppmpmp & \vect{7}{2}&\\
 ppmmpp & \vect{5}{2}&\\
pmpmpp & \vect{3}{2}&\\   
pmmppp & \vect{1}{2}& =\vect{A_{2,0}}{2}\\ 
\phantom{pmmp}pp&\vect{1}{2}& =\vect{A_{2,0}}{2}
\end{array}
$$
Note that the leading string $pmmp$ in the penultimate row represents the zero vector, its erasing gives a shorter representation of $\vect{A_{2,0}}{2}$.     
\end{example}

\begin{thm}\label{thm:plus1}Let $b \geq 0$. Consider  $\vectab \in \mathbb{Z}^2$. Let  $\ell\in \N$ be the minimal number that satisfies 
 $$\ell \equiv a-\tfrac{b(b-1)}{2} \mod 2 \qquad \text{and} \qquad 
    -\ell^2 \leq a-\tfrac{b(b-1)}{2}\leq \ell^2 + 2b\ell. $$
Then the shortest representation of $\vectab$ has length $b + 2\ell$.    
\end{thm}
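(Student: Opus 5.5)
By Remark~\ref{rmk:prop}\eqref{it:2}, a representation of $\vectab$ with exactly $\ell'$ letters $m$ has length $b+2\ell'$. The theorem therefore reduces to two claims. First, $\vectab$ has a representation with exactly $\ell$ letters $m$. Second, no representation has fewer than $\ell$ letters $m$. Throughout I write $c = a-\tfrac{b(b-1)}{2}$, so the two conditions in the statement read $\ell\equiv c \pmod 2$ and $a_{b,\ell}\le a\le A_{b,\ell}$.

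The first step is a parity restriction. By Lemma~\ref{lem:ab1}, and since the swap in Remark~\ref{rmk:prop}\eqref{it:5} preserves parity, the value of $a$ modulo $2$ is determined by $b$ and by the number $\ell'$ of letters $m$. Concretely, every $a\in\mathcal S_{b,\ell'}$ has the parity of $A_{b,\ell'}$, which is $\tfrac{b(b-1)}{2}+\ell'$ mod $2$. To see this, note that any representation can be sorted into $p^{b+\ell'}m^{\ell'}$ by swaps $mp\to pm$, each of which changes $a$ by $2$. Hence any representation of $\vectab$ uses a number of $m$'s congruent to $c$ mod $2$.

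For the lower bound, suppose $\ell'<\ell$ with $\ell'\equiv\ell\pmod 2$. The minimality of $\ell$ implies that the interval condition fails at $\ell'$, i.e.\ $a>A_{b,\ell'}$ or $a<a_{b,\ell'}$. Since $A_{b,\ell}$ increases and $a_{b,\ell}$ decreases in $\ell$ (Remark~\ref{rem:parity}), the set $\mathcal S_{b,\ell'}\subseteq[a_{b,\ell'},A_{b,\ell'}]$ does not contain $a$. Together with the parity restriction, this gives at least $\ell$ letters $m$. When $\ell\ge2$ this is exactly Lemma~\ref{lem:atLeast} applied with $\ell'=\ell-2$, and the argument above covers all smaller $\ell'$ of the same parity as well. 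When $\ell\in\{0,1\}$ the only constraint is parity, and it settles the bound directly.

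For existence, when $\ell\ge2$ the minimality of $\ell$ gives $a\notin[a_{b,\ell-2},A_{b,\ell-2}]$. Combined with $a_{b,\ell}\le a\le A_{b,\ell}$, we are exactly in the hypotheses of Lemma~\ref{lem:atMost}, which supplies a representation with $\ell$ letters $m$. The main obstacle is the small cases $\ell=0,1$, which Lemma~\ref{lem:atMost} does not cover.

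For $\ell=0$ the condition reads $0\le c\le 0$, so $a=\tfrac{b(b-1)}{2}$ and $p^b$ works. For $\ell=1$ we have $-1\le c\le 1+2b$ with $c$ odd, and I will take $a = A_{b,1}-2j$ with $0\le j\le b+1$. Starting from $p^{b+1}m$, I move the single $m$ leftward $j$ times, which the $b+1$ available positions allow. The extreme case $j=b+1$ gives $mp^{b+1}$, representing $a_{b,1}$, so the whole range is covered. With these two cases in hand, the theorem follows from the length formula in Remark~\ref{rmk:prop}\eqref{it:2}.
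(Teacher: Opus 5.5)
Your proof is correct and follows essentially the paper's route: Lemmas~\ref{lem:atLeast} and~\ref{lem:atMost} for $\ell\ge 2$, and the explicit strings $p^b$ and $p^{b+1-j}mp^j$ with $0\le j\le b+1$ for $\ell\in\{0,1\}$. Your explicit parity argument on the number of letters $m$ also supplies the lower bound in the case $\ell=1$, which the paper leaves implicit.
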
  
\begin{proof}   If $\ell \geq 2$, then the statement follows from Lemmas \ref{lem:atLeast}   and \ref{lem:atMost}.    

Let $\ell = 0$. Then $a=\tfrac{1}{2}b(b-1)$. In this situation, if $b=0$, then $a =0$ and the representation of the vector is the empty word, i.e. its length is $0$.
\footnote{Typically if the zero vector is in the digit set, one would consider the length one string with digit zero to be the shortest representation of the zero vector.  As the zero vector is not in the digit set in this case, it is notationally convenient to treat the empty word as the shortest representation of zero.}
If $b> 0$, then the string $p^b$ represents the vector $\vectab $ and the representation is optimal. 

Let $\ell = 1$.  Then 
\begin{enumerate}
\item either $-1\leq a-\frac{1}{2}b(b-1) < 0$; in this case  $a =\frac{1}{2}b(b-1) - 1$ and the string $mp^{b+1}$ is the representation of   $\vectab$. 
\item or $1\leq a-\frac{1}{2}b(b-1)\leq 1+2b$. Since $a-\frac{1}{2}b(b-1)$ is odd, there exists $j \in \mathbb{N}, j\leq b$ such that $ a=\frac{1}{2}b(b-1) +1+2b - 2j$. The string $p^{b+1-j}mp^j$ represents the vector $\vectab$.   
\end{enumerate}
\end{proof}

\subsection{The number of representations}
\label{asec:J1 generating}

\begin{theorem}
Consider the full number system $(J_2(1), \{p, m\})$ with $p = \vectp$ and $m = \vectm$.
Then the number of representations of length $k$ of the term $\vectab$ is equal to the coefficient of $x^a t^b$ of the polynomial
\[ P_k(x,t) = \prod_{i=0}^{k-1} \left(t x^i + \frac{1}{t x^i}\right).\]
\end{theorem}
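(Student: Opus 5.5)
The plan is to combine Lemma \ref{lem:ab1} with a direct expansion of the product $P_k(x,t)$. A representation of length $k$ is a word $d_{k-1}\cdots d_0$ with each $d_i\in\{p,m\}$. Such words are in bijection with functions $\varepsilon\colon\{0,\dots,k-1\}\to\{+1,-1\}$, where $\varepsilon(i)=+1$ iff $d_i=p$. By Lemma \ref{lem:ab1}, the word represents $\vectab$ with
\[ a=\sum_{i=0}^{k-1}\varepsilon(i)\, i, \qquad b=\sum_{i=0}^{k-1}\varepsilon(i). \]
So the number of representations of length $k$ of $\vectab$ equals the number of sign functions $\varepsilon$ with these two weighted sums. (We should first justify Lemma \ref{lem:ab1} itself from $J_2(1)^i=\left(\begin{smallmatrix}1&i\\0&1\end{smallmatrix}\right)$, which gives $J_2(1)^i\vect{0}{\pm1}=\pm\vect{i}{1}$.)

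Next, expand the product. Write each factor as $t x^i+\frac{1}{tx^i}=\sum_{\varepsilon_i\in\{\pm1\}} t^{\varepsilon_i}x^{\varepsilon_i i}$. Distributing over $i=0,\dots,k-1$ gives
\[ P_k(x,t)=\sum_{\varepsilon\in\{\pm1\}^k} x^{\sum_i \varepsilon_i i}\, t^{\sum_i\varepsilon_i}. \]
This is a Laurent polynomial in $x,t$, so its coefficients are well defined. Each choice of $\varepsilon$ contributes exactly one monomial $x^a t^b$, where $(a,b)$ is the vector represented by the corresponding word. Collecting terms, the coefficient of $x^a t^b$ counts exactly the words of length $k$ representing $\vectab$, which is the claim.

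There is essentially no obstacle. The only care needed is to interpret $P_k$ as a Laurent polynomial, since negative exponents occur, and to note that the $i=0$ factor $t+t^{-1}$ matches $M^0=I$. Different words give different terms in the expansion, so the count is exact with no overcounting.
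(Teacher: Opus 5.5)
Your proof is correct and is essentially the paper's argument. The paper proves $\sum_{\omega\in\{p,m\}^k}\Phi(\omega)=P_k(x,t)$ by induction on $k$: prepending a digit at position $k$ multiplies by $tx^k+\frac{1}{tx^k}$. That is just the inductive form of your direct distribution of the product over sign choices, and your justification of Lemma~\ref{lem:ab1} via $J_2(1)^i p=\vect{i}{1}$ and your note that $P_k$ is a Laurent polynomial make explicit what the paper leaves implicit.
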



\begin{proof}
Let $\omega \in \{p, m\}^k$ be a word of length $k$. 
Define $\Phi: \{p,m\}^* \to \mathbb{Z}[x,t]$ by 
$\Phi(\omega) = x^a t^b$ where $[\omega]_M = \vectab$.

We claim that \[ \sum_{\omega \in \{p,m\}^k} \Phi(\omega) = P_k(x,t).\]
We prove this by induction.
This is clearly true for $k = 1$.
Assume that it is true for $k$.
Then we have
\begin{align*}
    \sum_{\omega \in \{p,m\}^{k+1}} \Phi(\omega) 
        &= \sum_{\omega' \in \{p, m\}^k} \Phi(p \omega') +  \sum_{\omega' \in \{p, m\}^k} \Phi(m \omega') \\
        & = t x^{k} P_k(x,t) + \frac{1}{t x^{k}}  P_k(x,t) \\
        & = (t x^{k} + \frac{1}{t x^{k}}) P_k(x,t) \\
        & = P_{k+1} (x,t).
\end{align*}
The third line follows from the observation that $J_{2}(1)^{k} p = \vect{k}{1}$ and $J_2(1)^{k} m = \vect{-k}{-1}$.
\end{proof}

\section{The Jordan block $J_n(-1)$}
\label{sec:Jnm1}

The following result is an analog of \cite[Lemma 2.1]{CHV24}. 
 To formulate it, we denote by the symbol  $[w]_M$   a vector from $\mathbb{Z}^n$ represented by the string 
 $w \in \mathcal{D}^*$.  
\begin{thm} \label{thm:minus1n}
  Let $n \in \N$ and $\D \subset \mathbb{Z}^n$ with $\#\D < \infty$. 
  Let $M \coloneqq J_n(-1)$. 
Assume that there exists a $z \in \D^*$ of odd length such that $[z]_M = (0, \dots, 0)^T$.
Then \((M,\D)\) is a full number system if and only if for each \(1 \le j \le n\) there exist words \(t_j,u_j \in \D^*\) such that
  \[[t_j]_M = (\tau_{j,1},\ldots,\tau_{j,j-1},T_j,0,\ldots,0)^T,\]
  \[[u_j]_M = (\upsilon_{j,1},\ldots,\upsilon_{j,j-1},U_j,0,\ldots,0)^T\]
  with \(\gcd(T_j,U_j) = 1\).
\end{thm}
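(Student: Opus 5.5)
The plan is to exploit the concatenation rule $[wv]_M = M^{|v|}[w]_M + [v]_M$ together with the invariant flag $V_0=\{0\}\subset V_1\subset\cdots\subset V_n=\Z^n$, where $V_j$ consists of the integer vectors whose last $n-j$ coordinates vanish. Since $M=J_n(-1)$ is upper triangular with $-1$ on the diagonal, we have $MV_j=V_j$, and $M^{\pm1}$ preserves $V_j\cap\Z^n$ because $\det M=\pm1$. Moreover $M$ acts on $V_j/V_{j-1}$ as multiplication by $-1$. Consequently, for $x\in V_j$ the $j$-th coordinate of $M^k x$ equals $(-1)^k x_j$.

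\emph{Necessity} is immediate. If $(M,\D)$ is full, then the unit vector $e_j$ has some representation $t_j=u_j$, and this word has $T_j=U_j=1$, so $\gcd(T_j,U_j)=1$.

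\emph{Sufficiency.} I will prove by induction on $j$ that every vector of $V_j$ is represented; $j=0$ is handled by the empty word. Two observations about the zero word $z$ of odd length drive the argument. First, $[zw]_M=M^{|w|}\cdot 0+[w]_M=[w]_M$, so a word can be padded on the left without changing its value, and in particular its length can be made of either parity. Second, $[wz]_M=M^{|z|}[w]_M$, which flips the sign of the $j$-th coordinate when $[w]_M\in V_j$.

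Fix $j$ and let $S_j$ be the set of integers $c$ such that some word $w$ satisfies $[w]_M\in V_j$ with $j$-th coordinate $c$. I claim $S_j$ is closed under addition. Given such $w$ and $v$, first replace $v$ by $zv$ if needed so that $|v|$ is even. Then $[wv]_M=M^{|v|}[w]_M+[v]_M$ lies in $V_j$ and has $j$-th coordinate $c_w+c_v$. I also claim $S_j$ is closed under negation, via $w\mapsto wz$. Since $T_j,U_j\in S_j$, these closures give $S_j\supseteq T_j\Z+U_j\Z=\Z$.

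Now take $x\in V_j$. Choose a word $w$ with $[w]_M\in V_j$ and $j$-th coordinate $x_j$. Then $y\coloneqq M^{-|w|}(x-[w]_M)$ is an integer vector in $V_{j-1}$, by invariance and because $M^{-1}$ is integral. By the induction hypothesis $y=[v]_M$ for some word $v$, and then
\[
[vw]_M=M^{|w|}[v]_M+[w]_M=x .
\]
Taking $j=n$ gives fullness.

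The only delicate point is the parity and sign bookkeeping in the semigroup step. One has to make sure that padding by $z$ is applied to the correct side: on the left it preserves the value but changes the length parity, and on the right it flips the sign of the $j$-th coordinate. One must also check that the lower coordinates, which may be arbitrary, do not matter. They do not, because each step only needs to control the value modulo $V_{j-1}$, and the final correction in the displayed equation is absorbed by the induction hypothesis.
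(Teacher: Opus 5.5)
Your proof is correct and follows essentially the paper's route. Your closure argument for $S_j$ plays the role of the paper's Lemma~\ref{lem:V} (B\'ezout, with the odd-length zero word $z$ used to fix parity and flip signs), and your induction on $j$ is the paper's coordinate-by-coordinate prepending of words; the only cosmetic difference is that you absorb the correction via $M^{-|w|}(x-[w]_M)$ instead of padding $w$ to even length as the paper does.
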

\begin{remark}
    It is worth remarking that if the set of digits contains the zero digit, then there exists a $z \in \D^*$ of length 1 satisfying the second condition of the theorem.
\end{remark}
\begin{remark}
    It is unclear if $(M, \D)$ being a full number system implies that there exists a $z \in \D^*$ of odd length with $[z]_M = (0, \dots, 0)^T$.
\end{remark}
Before we prove the previous theorem, we need the following lemma:
\begin{lemma}\label{lem:V} Let $j\in \{1,2,\ldots,n\}$ be fixed. Assume that there exists a   word $z \in \D^*$ of odd length such that $[z]_M = (0, \dots, 0)^T$.  Assume further that there exists $t,u\in \D^*$
such that
  \[[t]_M = (\tau_1,\ldots,\tau_{j-1},T,0,\ldots,0)^T,\]
  \[[u]_M = (\upsilon_{1},\ldots,\upsilon_{j-1},U,0,\ldots,0)^T\]
  with \(\gcd(T,U) = 1\).
   Then for each $V \in \mathbb{Z}$ there exists a word $v \in \D^*$ such that  \[[v]_M = (\nu_1,\ldots,\nu_{j-1},V,0,\ldots,0)^T,\]
\end{lemma}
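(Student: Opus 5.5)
The plan is to show that the set
\[
G_j \coloneqq \{\, c \in \mathbb{Z} : \text{there is } v \in \D^* \text{ with } [v]_M = (\nu_1,\ldots,\nu_{j-1},c,0,\ldots,0)^T \,\}
\]
is a subgroup of $\mathbb{Z}$. Since $T,U \in G_j$ and $\gcd(T,U)=1$, Bézout's identity then gives $G_j=\mathbb{Z}$, which is the claim.

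\textbf{Concatenation and the shape of the vectors.} The basic tool is the rule $[xy]_M = M^{|y|}[x]_M + [y]_M$, which follows directly from the definition of $[\cdot]_M$. Since $M=J_n(-1)$ is upper triangular with diagonal entries $-1$, every power $M^k$ has the same form with diagonal entries $(-1)^k$. Hence $M^k$ maps any vector whose coordinates $j+1,\ldots,n$ vanish to a vector of the same form, and it multiplies the $j$-th coordinate by $(-1)^k$. So if $[x]_M$ and $[y]_M$ both have zeros beyond position $j$, with $j$-th coordinates $X$ and $Y$, then $[xy]_M$ also has zeros beyond position $j$, and its $j$-th coordinate is $(-1)^{|y|}X+Y$.

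\textbf{Controlling the parity with $z$.} Next I use the odd-length word $z$ with $[z]_M=\vec 0$.
\begin{itemize}
\item Prefixing: $[zy]_M = M^{|y|}\vec 0 + [y]_M = [y]_M$. The value is unchanged, but the length parity flips. So every $c \in G_j$ is realized by a word of even length.
\item Suffixing: $[yz]_M = M^{|z|}[y]_M$, because $[z]_M = \vec 0$. Since $|z|$ is odd, this negates the $j$-th coordinate and keeps the zeros beyond $j$. So $G_j=-G_j$.
\end{itemize}

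\textbf{Closure and conclusion.} Take $X,Y\in G_j$, realized by words $x$ and $y$, and choose $y$ of even length. Then $xy$ realizes $X+Y$, so $G_j$ is closed under addition. The empty word realizes $0$ (alternatively, $z$ does). Therefore $G_j$ is a subgroup of $\mathbb{Z}$ containing the coprime integers $T$ and $U$, so $G_j=\mathbb{Z}$.

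The only step needing care is the parity and sign bookkeeping in the concatenation rule. In particular, one has to check that the odd length of $z$ is exactly what lets us both negate values and switch parity. Apart from that, the argument is routine.
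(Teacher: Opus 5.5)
Your proof is correct and follows essentially the paper's route: prefix $z$ to make word lengths even, suffix $z$ to flip the sign of the $j$-th coordinate, concatenate to add, and finish with Bézout. The only difference is presentation: the paper writes down the explicit word $t^{|x|}\,r\,u^{|y|}\,s$ with $r,s\in\{z,\text{empty word}\}$, whereas you show that the set $G_j$ is a subgroup of $\mathbb{Z}$, which avoids having to choose $r$ and $s$ by sign cases.
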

\begin{proof} By noting that $[t]_M = [z t]_M$ we will assume without loss of generality that the lengths of $t$ and $u$ are even.

  Note that for every \(k \in \N\), we have
  \[ M^k [t]_M = (\tau'_1,\ldots,\tau'_{j-1},(-1)^k T,0,\ldots,0)\]
  for some \(\tau'_1,\ldots,\tau'_{j-1}\), and analogously for \(u\). Given any \(V \in \Z\), from Bézout’s lemma we have \(x T + y U = V\) for some \(x,y \in \Z\), so
  \[[
    \underbrace{t \ \cdots \ t}_{|x|-times}
    r
    \underbrace{u\  \cdots \  u}_{|y|-times}
    s
  ]_M = (\nu_1,\ldots,\nu_{j-1},V,0,\ldots,0)^T.\]
Here $r$ and $s$ are either $z$ or the empty word, depending upon the sign of $x$ and $y$.
    
\end{proof}
\begin{proof}[Proof of Theorem \ref{thm:minus1n}]
The one direction is obvious.  
If it is a full number system, then there exists such $t_j$ and $u_j$.

To prove the other direction, assume there exist such $t_j$ and $u_j$. 
Let us realize the simple fact:

If $b, c   \in \D^*$  such that  the length of $b$ is even,   $[b]_M= (b_1,\ldots, b_n)^T$ and  $[c]_M=(c_1,\ldots, c_{j}, 0,\ldots, 0)^T$, then
for some $b_1', \ldots, b_{j-1}' \in \mathbb{Z}$
\begin{equation}\label{eq:addition}
[cb]_M= (b_1', \ldots, b_{j-1}', b_j+c_j, b_{j+1}, \ldots, b_n )^T.\end{equation}

Now we will show how for any given vector $(a_1,a_2,\ldots, a_n)^T \in \mathbb{Z}^n$  one can find its representation.

Applying  Lemma \ref{lem:V} with  $j=n$  and  $V=a_n$ we find 
a word $v=:w^{(n)}$ such that  $[w^{(n)}]_M=(w^{(n)}_1, \ldots, w^{(n)}_{n-1}, a_n)$ for some integers $w^{(n)}_1, \ldots, w^{(n)}_{n-1}$. 
Without loss of generality the length of $w^{(n)}$ is even. 

Using the same lemma with $j=n-1$
 and $V = a_{n-1}- w^{(n)}_{n-1}$  we find a word $v$  such that $[v]_M=(\nu_1, \ldots, \nu_{n-2}, V, 0)$.  The rule \eqref{eq:addition}  says that the concatenation 
 $w^{(n-1)}:=vw^{(n)}$ represents a vector of the form
 $
w^{(n-1)} =(w^{(n-1)}_1, \ldots, w^{(n-1)}_{n-2},a_{n-1}, a_n)
$. Repeating the same procedure we finally obtain a word representing the given vector  $(a_1,a_2,\ldots, a_n)^T$.
\end{proof}

Consider $J_n(-1)$, the $n \times n$ Jordan block with eigenvalue $-1$. From now on, we focus on the digit set  $\{p, z\}$ with $p = (0, 0, \dots, 0, 1)^T$ and $z$ as the zero vector.
Before we prove the main result, we need the following lemma:
\begin{lemma} \label{lem:j pos} Let $j\in \N, 1\leq j<n$. 
Assume that there exists a  word $w \in \{p, z\}^*$  such that $[w]_M = (\omega_1, \omega_2, \dots, \omega_{j}, 1, \underbrace{0, \dots, 0}_{n-j-1})^T$.  Then there exists $w'\in \{p,z\}^*$ 
such that $[w']_M = (\omega'_1, \omega'_2, \dots, \omega'_{j-1}, 1, \underbrace{0, \dots, 0}_{n-j})^T$. 
\end{lemma}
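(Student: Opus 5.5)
The plan is to build $w'$ explicitly by concatenating several copies of $w$ separated by blocks of the zero digit $z$. This relies on the identity $[uv]_M = M^{|v|}[u]_M + [v]_M$ and on $[z^c]_M=\vec 0$. For a word
\[
w' = w\,z^{c_1}\,w\,z^{c_2}\,w\,z^{c_3}\,w,
\]
we get $[w']_M=\sum_{i} M^{k_i}[w]_M$. Here the exponents $k_i$ are $0$ (for the last copy) and, for the earlier copies, the partial sums of lengths $|w|+c_i$ lying to their right. Consequently any finite set of exponents $0=k_1<k_2<\dots$ whose consecutive gaps are at least $|w|$ is realizable, by choosing $c_i=k_{i+1}-k_i-|w|\ge 0$.

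Next I compute $M^k x$ for $x=[w]_M=(\omega_1,\dots,\omega_j,1,0,\dots,0)^T$. Write $M=-I+N$, where $N$ is the nilpotent shift with $(Nx)_i=x_{i+1}$. Since $x$ is supported in the first $j+1$ coordinates, so is $M^k x$. Its $(j+1)$-th coordinate is $(-1)^k$ and its $j$-th coordinate is $(-1)^k\omega_j + k(-1)^{k-1}$; the earlier coordinates are irrelevant. Summing over the exponents $k_i$, the $(j+1)$-th coordinate of $[w']_M$ is $\sum_i(-1)^{k_i}$. This vanishes exactly when equally many $k_i$ are even and odd. In that case the $j$-th coordinate equals $-\sum_i(-1)^{k_i}k_i=\sum_{k_i\text{ odd}}k_i-\sum_{k_i\text{ even}}k_i$, because the $\omega_j$ terms cancel.

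So it suffices to choose four exponents: two even ones, $0$ and $E$, and two odd ones, $O_1<O_2$, with $O_1+O_2-E=1$ and gaps at least $|w|$. I take $O_1$ odd with $O_1\ge |w|+1$, then $O_2$ odd with $O_2\ge O_1+|w|$, and set $E=O_1+O_2-1$, which is even. The ordering is $0<O_1<O_2<E$, with gaps $O_1\ge|w|$, $O_2-O_1\ge|w|$ and $E-O_2=O_1-1\ge|w|$. Then $[w']_M$ has zeros in positions $j+1,\dots,n$ and a $1$ in position $j$, as required; the first $j-1$ coordinates are whatever they are. The only real obstacle is finding this balancing of parities while also producing the value $1$ rather than some larger $k$. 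A naive two-copy construction yields $|w|+c$ in position $j$, which is at least $1$ but generally not equal to it. The four-copy choice above is what resolves this, and the remaining work is just the routine verification of the binomial formula for $M^k x$ in coordinates $j$ and $j+1$.
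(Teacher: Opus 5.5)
Your framework is correct. The identity $[uv]_M=M^{|v|}[u]_M+[v]_M$ holds. Any exponent set with gaps at least $|w|$ is realizable. The formulas $(M^kx)_{j+1}=(-1)^k$ and $(M^kx)_j=(-1)^k\omega_j+k(-1)^{k-1}$ are also right.

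The construction fails at the last step, on parity. Since $O_1$ and $O_2$ are both odd, $O_1+O_2$ is even, so $E=O_1+O_2-1$ is \emph{odd}, not even. Your exponents $0,O_1,O_2,E$ then contain three odd values, and the $(j+1)$-th coordinate of $[w']_M$ is $1-3=-2$. Concretely, for $n=2$, $w=p$, $O_1=3$, $O_2=5$, you get exponents $0,3,5,7$ and $[w']_M=(15,-2)^T$.

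Choosing the numbers differently cannot repair this. If $r$ exponents are even and $r$ are odd, then $\sum_{k_i\ \mathrm{odd}}k_i-\sum_{k_i\ \mathrm{even}}k_i\equiv r \pmod 2$. So with four copies ($r=2$) the $j$-th coordinate is always even and never equals $1$.

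You need an odd number $r\ge 3$ of balanced pairs. The case $r=1$ only yields $\pm(k_2-k_1)$ with $k_2-k_1\ge|w|$, which is $1$ only when $|w|\le 1$. For instance, take $D$ odd with $D\ge |w|$ and the six exponents $0,D,2D,3D,5D,7D-1$. Their gaps $D,D,D,2D,2D-1$ are all at least $|w|$. Their parities are even, odd, even, odd, odd, even. The $j$-th coordinate is $(D+3D+5D)-(2D+7D-1)=1$. With this change your direct argument goes through and needs no appeal to Bézout.

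The paper takes a different route. After padding $w$ to even length $2m$, the words $wzw$ and $wzzzw$ give $j$-th coordinates $2m+1$ and $2m+3$, with zeros beyond. Lemma~\ref{lem:V} then combines these coprime values into $1$. Since both values are odd, any relation $x(2m+1)+y(2m+3)=1$ has $x+y$ odd. So that construction also uses an odd number of balanced pairs, as the parity obstruction requires.
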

\begin{proof}  Suppose that $w$ has an even length $2m$  (prepend $z$ if necessary).  Denote  $w = a_{2m-1} \dots a_0$.
Notice that
\begin{align*}
    [w z w]_M & = \sum_{i=0}^{2m-1} M^{2m+1} M^i a_i + \sum_{i=0}^{2m-1} M^i a_i \\
    & = (M^{2m+1} + I) \sum_{i=0}^{2m-1} M^i a_i \\
    & = (M^{2m+1} + I) (\omega_1, \dots, \omega_{j}, 1, \underbrace{0, \dots, 0}_{n-j-1})^T. \\ 
\end{align*}
As $M^{2m+1}+I$ is an upper triangular matrix, with $0$ on the main diagonal, and $2m+1$ for entries $(i+1, i)$, we deduce that  
  $$ [w z w]_M =(b_1, \ldots, b_{j-1}, 2m+1,\underbrace{0, \ldots, 0}_{n-j}).
$$
Analogously,  
$[w z z z w]_M = (M^{2m+3} + I) (\omega_1, \dots, \omega_{j}, 1, \underbrace{0, \dots, 0}_{n-j-1})^T$, hence 
  $$ [w zzz w]_M =(c_1, \ldots, c_{j-1}, 2m+3,\underbrace{0, \ldots, 0}_{n-j}).
$$
Since $2m+1$ and $2m+3$ are coprime, Lemma \ref{lem:V} applied to $t=wzw$, $u=wzzzw$ and $V=1$ implies the statement. 
\end{proof}

\begin{theorem}
    Let $M = J_n(-1)$ and $\D = \{(0, \dots, 0, 1)^T, (0, \dots, 0)^T\}$. 
    Then $(M, \D)$ is a full number system.
\end{theorem}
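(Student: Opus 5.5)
The plan is to verify the hypotheses of Theorem~\ref{thm:minus1n} for $\D=\{p,z\}$ with $p=(0,\dots,0,1)^T$ and $z$ the zero vector, and then invoke that theorem.

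The odd-length word condition is immediate: the one-letter word $z$ has odd length and $[z]_M=(0,\dots,0)^T$, as noted in the remark following Theorem~\ref{thm:minus1n}.

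It then suffices to produce, for each $1\le j\le n$, words $t_j,u_j\in\{p,z\}^*$ with $[t_j]_M$ and $[u_j]_M$ having $j$-th coordinates $T_j,U_j$ with $\gcd(T_j,U_j)=1$ and all later coordinates zero. The simplest choice is to take $t_j=u_j$ with $T_j=U_j=1$, since $\gcd(1,1)=1$. So the goal reduces to showing that for every $j$ there is a word $w^{(j)}\in\{p,z\}^*$ with
\[
[w^{(j)}]_M=(\omega_1,\dots,\omega_{j-1},1,0,\dots,0)^T .
\]

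I would obtain these words by downward induction on $j$. For $j=n$ the word $p$ works, since $[p]_M=(0,\dots,0,1)^T$. For the inductive step, suppose $w^{(j+1)}$ exists for some $1\le j<n$. This is exactly the hypothesis of Lemma~\ref{lem:j pos}, and the lemma yields a word $w'$ whose $j$-th coordinate is $1$ and whose coordinates $j+1,\dots,n$ are zero; set $w^{(j)}:=w'$. Iterating from $j=n-1$ down to $j=1$ gives all the required words. Taking $t_j=u_j=w^{(j)}$ for every $j$, Theorem~\ref{thm:minus1n} then shows that $(M,\D)$ is a full number system.

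The substantive work, namely producing the coprime pair $2m+1,\ 2m+3$ via $wzw$ and $wzzzw$, is already contained in Lemma~\ref{lem:j pos}. The only point to check is the indexing, so that the lemma's conclusion at level $j$ (with $j-1$ free coordinates, a $1$, then zeros) matches the format required in Theorem~\ref{thm:minus1n}. It does.
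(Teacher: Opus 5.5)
Your proposal is correct and follows the paper's proof: you take $t_j=u_j$ with $T_j=U_j=1$, start from $t_n=p$, apply Lemma~\ref{lem:j pos} repeatedly downward, and conclude with Theorem~\ref{thm:minus1n}. You also check explicitly that the one-letter word $z$ is the required odd-length representation of the zero vector, and that the indexing of Lemma~\ref{lem:j pos} matches the theorem's hypotheses, both of which the paper leaves implicit.
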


\begin{proof}
To show the fullness of $(M, \D)$, it is enough to find words $t_j,u_j$ with the properties described in Theorem \ref{thm:minus1n}. 

Since $[p]_M = (0,\dots,0,1)$, we set  $t_n=u_n=p$. 
   By Lemma \ref{lem:j pos}, there exists a word, denote it $t_{n-1}$, such that   $[t_{n-1}]_M = (\tau_{n-1,1}, \dots, \tau_{n-1,n-2}, 1, 0)$.  We set $u_{n-1}=t_{n-1}$. 
    Repeating in this fashion, we can construct $t_j=u_j$ such that 
    $[t_{j}]_M = (\tau_{j,1}, \dots, \tau_{j,j-1}, 1, 0)$
   for each $j = 1, 2, \dots, n$, as desired. 
   
\end{proof}


\section{The Jordan block $J_2(-1)$}
\label{sec:Jm1}

In this section, we consider the case of $M=J_2(-1)$ with digit set $\{\vectp, \vectz\}$.
We let $p = \vectp$ and $z = \vectz$ for ease of notation.
We see from Section \ref{sec:Jnm1} that this is a full number system.
Noting that $[ppzpp]_M= \vectz = [z]_M$ shows that it is not a canonical number system.
In fact, all $\vectab$ have infinitely many pairwise different  representations.
The analogous question to Section \ref{sec:J1} is, what is the minimal length representation of $\vectab$.
This is answered in the first subsection.

In the second subsection, we consider instead the minimal ``weight'' representation of $\vectab$.  Here we define the weight of a representation as the number of non-zero terms in the representation.

The last subsection considers the question of the number of representations of $\vectab$ of length $k$.

Unlike Section \ref{sec:J1}, there is no obvious symmetry for these representations that allows one to easily 
    derive the representation of $\vect{-a}{-b}$ from that of $\vectab$.
For example, the minimal length representation of $\vect{3}{1}$ is $pzzpzp$ while the minimal length representation for $\vect{-3}{-1}$ is $pppzzpzpz$.

As a first useful Lemma, we have
\begin{lemma} \label{lem:ab2}
    Let $d_{k-1} \dots d_0$ be a representation in the number system $(J_2(-1), \{p, z\})$ for $\vectab$. Then
    \begin{align*}        
        b & = \#\{i: d_i = p, i\text{ even}\} - \#\{i: d_i = p, i\text{ odd}\} \\
        a & = \sum_{d_i = p, i\text{ odd}}i  - \sum_{d_i = p, i\text{ even}} i.
    \end{align*}
\end{lemma}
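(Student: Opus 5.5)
The plan is to compute the powers of $M=J_2(-1)$ explicitly and then read off the two coordinates of $[d_{k-1}\cdots d_0]_M=\sum_{i=0}^{k-1}M^i d_i$ term by term. Since the digit $z$ contributes nothing, only the indices $i$ with $d_i=p$ matter. For each such index we need the vector $M^i p = M^i\vectp$, so the whole argument reduces to a closed form for $M^i$.

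First we establish, by induction on $i\ge 0$, that
\[ M^i = \begin{pmatrix} (-1)^i & (-1)^{i-1} i \\ 0 & (-1)^i \end{pmatrix}. \]
The case $i=0$ is the identity, since $(-1)^{-1}\cdot 0=0$. For the inductive step we multiply by $M=\begin{pmatrix}-1&1\\0&-1\end{pmatrix}$. The diagonal entries become $(-1)^{i+1}$. The upper-right entry becomes $(-1)^i\cdot 1+(-1)^{i-1}i\cdot(-1)=(-1)^i(1+i)$, which equals $(-1)^{(i+1)-1}(i+1)$, as required. An alternative route is to write $M=-I+N$ with $N^2=0$ and apply the binomial theorem, which gives $M^i=(-1)^iI+i(-1)^{i-1}N$ directly.

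It then follows that
\[ M^i p=\begin{pmatrix} (-1)^{i-1} i \\ (-1)^i\end{pmatrix}. \]
Summing over the indices $i$ with $d_i=p$, the second coordinate is $\sum(-1)^i$. This contributes $+1$ for each even $i$ and $-1$ for each odd $i$, which gives the stated formula for $b$. The first coordinate is $\sum(-1)^{i-1}i$. This contributes $+i$ for odd $i$ and $-i$ for even $i$, which gives the stated formula for $a$.

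There is no real obstacle here. The only point needing care is the sign convention in the off-diagonal entry of $M^i$, and the verification at $i=1$ confirms it: $M p=\vect{1}{-1}$, which matches the term $+1$ in $a$ and $-1$ in $b$ for the odd index $1$.
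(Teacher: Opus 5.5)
Your proof is correct: the closed form $M^i=(-1)^iI+i(-1)^{i-1}N$ gives $M^ip=\vect{(-1)^{i-1}i}{(-1)^i}$, and reading off the two coordinates over the indices with $d_i=p$ yields both formulas. The paper states this lemma without a written proof, but your computation is exactly the one it relies on later ($M^{2i}p=\vect{-2i}{1}$ and $M^{2i+1}p=\vect{2i+1}{-1}$), so your approach matches the intended argument.
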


\subsection{The minimal length representation} 

In this section,  we denote $h\vectab$ the length of the shortest representation of the vector $\vectab$ in the number system $(J_2(-1), \{p,z\})$.  Note, that the length $0$ has only the vector $\vect{0}{0}$, which is represented by the empty string.

\begin{lemma}\label{lem:increasedecrease} Let $a,b \in \mathbb{Z}$.
\begin{enumerate} \item \label{it:4.2 1} If $h\vectab < h\vect{a+1}{b}$, 
then $\vectab$ has the unique shortest representation (up to leading zeros) of the form  
 \begin{itemize}
    \item either $(pz)^\alpha (zp)^\beta$, with $\alpha, \beta \in \N$ satisfying 
    $b =\beta-\alpha$ and $a = \alpha^2+2\alpha\beta - \beta^2 + \beta$, 
    \item or $(pz)^\alpha pp (zp)^\beta$ with $\alpha, \beta \in \N$ satisfying $b =\beta-\alpha$ and $a = \alpha^2+ 2\alpha\beta - \beta^2 + \beta + 2\alpha +1$.
\end{itemize}
 \item \label{it:4.2 2} If $h\vect{a-1}{b} > h\vectab $, then $\vectab$ has the unique shortest representation (up to leading zeros) of the form  
 \begin{itemize}
    \item either  $(zp)^\beta (pz)^\alpha$ \          with   $\alpha, \beta \in \N$,
    \item or  $(zp)^\beta zz (pz)^\alpha$ \          with   $\alpha, \beta \in \N$.
\end{itemize}
\end{enumerate}
\end{lemma}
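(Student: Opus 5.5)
The plan is to compute the contribution of each digit explicitly, then show that if $\vectab$ has a shortest representation $w$ of length $k=h\vectab$ not of the stated shape, a length-preserving local rewriting of $w$ gives a representation of $\vect{a+1}{b}$ of length at most $k$. That contradicts $h\vectab<h\vect{a+1}{b}$. Part \eqref{it:4.2 2} is the mirror image: there the moves must decrease $a$ by one.

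\textbf{Step 1: the contribution of each digit.} By Lemma \ref{lem:ab2}, a digit $p$ at position $i$ contributes $\bigl((-1)^{i+1}i,\,(-1)^i\bigr)^T$. Since leading zeros are harmless, I pad $w$ with one $z$ on the left if needed, so that it has even length. I then cut it into aligned blocks $d_{2j+1}d_{2j}$. Each block is one of $zz$, $pp$, $pz$, $zp$, with contributions:
\begin{itemize}
\item $zz$ contributes $(0,0)$;
\item $pp$ contributes $(1,0)$;
\item $pz$ contributes $(2j+1,-1)$;
\item $zp$ contributes $(-2j,1)$.
\end{itemize}
I also record the unaligned pairs $d_{2j}d_{2j-1}$, where $pp$ contributes $(-1,0)$.

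\textbf{Step 2: a catalogue of forbidden patterns in $w$.} Each pattern below admits a replacement that keeps $b$ and the length and raises $a$ by exactly $1$:
\begin{itemize}
\item an aligned $zz$ block strictly inside $w$, replaced by $pp$;
\item an unaligned $pp$, replaced by $zz$;
\item two aligned $pp$ blocks, say adjacent, replaced by $pz\,zp$ (contribution $3$ instead of $2$);
\item after a preliminary reshuffle, combinations of a $zp$ block sitting above a $pz$ block with a $pp$ or $zz$ block.
\end{itemize}
For the last item, swapping $zp$ above $pz$ raises $a$ by $4$, and the step of $4$ must be corrected back to $+1$ using a $pp\to$ two-block or $zz\leftrightarrow pp$ adjustment of size $-3$. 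The same bookkeeping with $pp$ next to $pz$ or $zp$ shows that a single $pp$ block must sit exactly at the junction between the $pz$ blocks and the $zp$ blocks.

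\textbf{Step 3: conclude the shape and uniqueness.} Once these patterns are excluded, the block sequence must be $(pz)^\alpha(zp)^\beta$ or $(pz)^\alpha pp(zp)^\beta$, up to leading zeros. Summing the block contributions gives $b=\beta-\alpha$ and the stated formulas for $a$. Uniqueness then follows in two stages. Given $b$, the two families have $a$ of opposite parity, since the second exceeds the first by the odd number $2\alpha+1$. Within a family, with $\beta-\alpha$ fixed, $a$ is strictly monotone in $\alpha$. So $(a,b)$ determines the family and $(\alpha,\beta)$.

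For part \eqref{it:4.2 2} I run the same argument with all inequalities reversed. The roles of $pz/zp$ are exchanged and $pp$ is replaced by $zz$, giving the forms $(zp)^\beta(pz)^\alpha$ and $(zp)^\beta zz(pz)^\alpha$.

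\textbf{Main obstacle.} The hardest part of Step 2 is turning the natural exchange moves, which change $a$ by $2$ or $4$, into a net change of exactly $+1$ without increasing the length. A further difficulty is the boundary: the top block, possibly created by padding, may not be replaceable without lengthening $w$. My first attempt will be a case analysis on the multiset of blocks, pairing each $+4$ or $+2$ exchange with a $-3$ or $-1$ correction from a $pp$ or $zz$ block, or from an unaligned pair. The boundary cases I would handle separately, using the minimality of $k$ to rule out a leading $z$.
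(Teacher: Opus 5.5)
Your first two bullets, aligned $zz\to pp$ and unaligned $pp\to zz$, are exactly the two moves the paper uses, and they already force the shape. The rest of Step 2 and your ``main obstacle'' are unnecessary. A $zp$ block directly above a $pz$ block reads $z\,pp\,z$. Likewise $pp$ above $pz$, $zp$ above $pp$, and $pp$ above $pp$ each contain an unaligned $pp$. So no $+4$/$-3$ bookkeeping is needed. Moreover, a pairing that requires a spare $pp$ or $zz$ block would have nothing to pair with when the word consists only of $pz$ and $zp$ blocks.

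The derivation is then two lines. Read the aligned blocks from the bottom. Once a block has top digit $p$ (it is $pp$ or $pz$), the next block up must have bottom digit $z$ and cannot be $zz$, so it is $pz$. This gives $(pz)^\alpha(zp)^\beta$ or $(pz)^\alpha pp(zp)^\beta$ at once. The boundary is harmless: minimality gives $d_{k-1}=p$, so a padded top block is $zp$, and both moves act inside the original $k$ positions. Part (2) is the mirror image, using aligned $pp\to zz$ and unaligned $zz\to pp$.

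The genuine error is in your uniqueness step. You claim that, for fixed $b$, the two families give values of $a$ of opposite parity. This is false: the odd difference $2\alpha+1$ only compares the two families at the \emph{same} $\alpha$. Substituting $\beta=\alpha+b$, the families give $a=2\alpha^2+\alpha+b-b^2$ and $a=2\alpha^2+3\alpha+1+b-b^2$, whose parities are $\alpha$ and $\alpha+1$ respectively. For instance, with $b=0$, $pzzp$ (first family, $\alpha=\beta=1$) represents $(3,0)^T$, and $pp$ (second family, $\alpha=\beta=0$) represents $(1,0)^T$; both have odd $a$.

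The correct argument is interleaving. The two values are $T_{2\alpha}+b-b^2$ and $T_{2\alpha+1}+b-b^2$ with $T_n=\tfrac12 n(n+1)$, which is strictly increasing in $n$. Equivalently, first family $<$ second family $<$ first family at $\alpha+1$, with gaps $2\alpha+1$ and $2\alpha+2$. Hence $(a,b)$ determines $n$, and therefore both the family (the parity of $n$) and $\alpha$. This is what the sequence $A_{b,n}$, introduced right after the lemma, encodes.
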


\begin{proof}  Let $\vectab = [d_{k-1} \dots d_1 d_0]_M$, where $k=h\vectab$.
First, we notice that the string $pp z^{2j}$ represents the vector  $\vect{1}{0}$ and the string $pp z^{2j+1}$ represents the vector $\vect{-1}{0}$.

If  $h\vectab < h\vect{a+1}{b}$, then 
\begin{enumerate}
\item for all $j$, $d_{2j+1}d_{2j} \neq zz$, as  otherwise the vector  $\vect{a+1}{b}$ is represented by the string $d_{k-1}\ldots d_{2j+2} pp d_{2j-1}\cdots d_0 $  of the same length. \label{en:1}
\item for all $j$,  $d_{2j+2}d_{2j+1} \neq pp$, as otherwise  the vector  $\vect{a+1}{b}$ is represented by the string $d_{k-1}\ldots d_{2j+3} zz d_{2j}\cdots d_0 $. \label{en:2}
\end{enumerate}

 The properties  force that every shortest representation $d_{k-1}
 \dots d_0$ has the form $(pz)^\alpha (zp)^\beta$ or $(pz)^\alpha pp (zp)^\beta$. A straightforward computation gives that if $ \vectab = [(pz)^\alpha (zp)^\beta]_M$ then  $b =\beta - \alpha$  and $a = \alpha^2+ 2\alpha\beta - \beta^2 + \beta$. Analogously, if $ \vectab = [(pz)^\alpha pp (zp)^\beta]_M$ then  $b =\beta - \alpha$  and $a = \alpha^2+ 2\alpha\beta - \beta^2 + \beta + 2\alpha +1$.  
 
Note that for arbitrary $a,b$ there exists at most one pair $\alpha, \beta \in \N $ that satisfies $b= \beta-\alpha$ and $a \in \{\alpha^2+ 2\alpha\beta - \beta^2 + \beta,  \alpha^2+ 2\alpha\beta - \beta^2 + \beta + 2\alpha +1\}$. Hence, the shortest representation  of $\vectab$ is unique. 

The proof of Item \eqref{it:4.2 2} is analogous.\end{proof}

\begin{definition} Let $b \in \mathbb{Z}$ be fixed.  The sequence $(A_{b,n})_{n \in \N}$ is defined as follows: for every $n \in \N $  we put 
$$
A_{b,n} = \tfrac12 n (n+1) +\left\{ \begin{array}{ll} b -b^2, & \text{if } b\geq 0;\\ 
&\\
b^2- 2bn, & \text{if } b<0.\end{array} \right. 
$$
\end{definition}

\begin{remark}\label{rem:AlphaBeta}  Note that the sequence  $(A_{b,n})_{n \in \N}$  is  chosen to satisfy for every $\alpha, \beta \in \mathbb{N}$  with $b:= \beta -\alpha$ the following equalities

$$
\alpha^2 +2\alpha \beta - \beta^2 + \beta = \left\{ \begin{array}{ll}A_{b, 2\alpha}& \text{ if } \ b \geq 0,\\ A_{b, 2\beta}& \text{ if } \ b < 0,\end{array}\right.$$
and 
$$
\alpha^2 +2\alpha \beta - \beta^2 + \beta+2\alpha+1 = \left\{ \begin{array}{ll}A_{b, 2\alpha+1}& \text{ if } \ b \geq 0,\\ A_{b, 2\beta+1}& \text{ if } \ b  < 0.\end{array}\right.
$$
\end{remark}

\begin{coro}  Let $a,b \in \mathbb{Z}$.

\begin{enumerate}
    \item \label{it:4.5 1} If $h\vect{a-1}b > h\vectab <h\vect{a+1}{b}$, then 
    $$a=A_{b,0} \quad \text{and} \quad h\vectab = \left\{ \begin{array}{cl}2b-1& \text{ if } \ b > 0,\\ 2|b|& \text{ if } \ b  \leq  0.\end{array}\right.$$

    \item \label{it:4.5 2}
 If $h\vect{a-1}b \leq h\vectab <h\vect{a+1}{b}$, then  for some 
 $n \geq 1 $
 $$a=A_{b,n}\quad \text{and}\quad  h\vectab = 2(n+|b|).$$    

 \item  \label{it:4.5 3}
 If $h\vect{a-1}b > h\vectab \geq h\vect{a+1}{b}$,
 then  
 $$a=-b-A_{-b,n}\quad \text{and}\quad  h\vectab = 2(n+|b|)+1$$  
for some 
 $n \geq 1 $ or for $n=0$ and $b>0$. 

\end{enumerate} 
\end{coro}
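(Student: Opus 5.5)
The plan is to combine the two parts of Lemma~\ref{lem:increasedecrease} with the parametrisation in Remark~\ref{rem:AlphaBeta}, and then compute lengths directly from the explicit strings. In each case the hypotheses on $h\vect{a\pm1}{b}$ tell us which part (or both parts) of the lemma applies. This pins down the shape of the unique shortest representation, hence $a$ and the length.

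\textbf{Item (2).} The hypothesis $h\vectab<h\vect{a+1}{b}$ puts us in part~\eqref{it:4.2 1}. So the shortest representation is $(pz)^\alpha(zp)^\beta$ or $(pz)^\alpha pp(zp)^\beta$, with $b=\beta-\alpha$.
\begin{itemize}
\item By Remark~\ref{rem:AlphaBeta}, $a=A_{b,n}$ with $n=2\alpha$ or $2\alpha+1$ if $b\ge0$, and $n=2\beta$ or $2\beta+1$ if $b<0$.
\item The string lengths are $2\alpha+2\beta$ and $2\alpha+2\beta+2$ respectively.
\item For $b\ge 0$ this is $2\alpha+2\beta=2(2\alpha)+2b$, i.e.\ $2(n+|b|)$ with $n=2\alpha$. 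This requires the leading $p$ to be present so that no leading zeros can be stripped; that holds when $\alpha\ge1$.
\item In the $pp$ case the length is $2(2\alpha+1)+2b-\ldots$; I will recompute carefully, possibly after removing a leading $z$.
\item The case $b<0$ is symmetric, with $\beta$ playing the role of $\alpha$.
\end{itemize}
The remaining task is to show $n\ge1$. If $n=0$, i.e.\ $\alpha=0$ (resp.\ $\beta=0$), the string begins with $zp$, so its shortest form drops the leading $z$. I would then show that in this situation the string $(zp)^\beta$ with the $z$ removed also satisfies the conditions of part~\eqref{it:4.2 2}. That is, every pair $d_{2j+1}d_{2j}$ avoids the pattern that would let us decrease $a$ at the same length, which contradicts $h\vect{a-1}b\le h\vectab$. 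So $n=0$ forces the situation of item~(1).

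\textbf{Item (1).} Both parts of Lemma~\ref{lem:increasedecrease} apply, and the unique shortest representation (up to leading zeros) must have both shapes simultaneously. Comparing $(pz)^\alpha(zp)^\beta$ or $(pz)^\alpha pp(zp)^\beta$ with $(zp)^{\beta'}(pz)^{\alpha'}$ or $(zp)^{\beta'}zz(pz)^{\alpha'}$, the only common words are $(zp)^\beta$ with $\alpha=0$, and $(pz)^{\alpha}$ (the case $\beta=0$, read as $(zp)^0(pz)^\alpha$).
\begin{itemize}
\item For $b>0$ we get $(zp)^b$, which has length $2b-1$ after deleting the leading $z$, and $a=A_{b,0}=b-b^2$.
\item For $b\le0$ we get $(pz)^{|b|}$, of length $2|b|$, with $a=A_{b,0}=b^2$ by Remark~\ref{rem:AlphaBeta} ($\beta=0$, $n=0$).
\end{itemize}
The $pp$ and $zz$ variants are excluded by checking that their letters do not match.

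\textbf{Item (3).} This is the mirror of item (2) using part~\eqref{it:4.2 2}. The shortest representation is $(zp)^\beta(pz)^\alpha$ or $(zp)^\beta zz(pz)^\alpha$, and we need the value of $a$ for these words. The key observation is that replacing every digit $p$ by $z$ and vice versa maps $(pz)^\alpha(zp)^\beta$ to $(zp)^\alpha(pz)^\beta$. This is the same family with roles swapped, and $pp$ maps to $zz$. Complementation of a word $w$ of even length $2L$ gives $[w]_M+[\bar w]_M=[(p)^{2L}]_M$, and Lemma~\ref{lem:ab2} gives $[p^{2L}]_M=\vect{L}{0}$. Hence $\vect{a}{b}\mapsto\vect{L-a}{-b}$ is length-dependent, which makes this approach awkward.

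Instead, I would compute directly from Lemma~\ref{lem:ab2}. For $(zp)^\beta(pz)^\alpha$ we have $b=\beta-\alpha$ and $a$ equal to an explicit quadratic, and I will match it with $-b-A_{-b,n}$ for $n=2\alpha$ or $2\beta$ (plus one for the $zz$ variant). The length is $2(\alpha+\beta)$ or $2(\alpha+\beta)+2$, minus one when the leading $z$ is stripped. This accounts for the odd value $2(n+|b|)+1$. The exceptional case $n=0$ with $b>0$ again comes from $\alpha$ or $\beta$ vanishing, and the other $n=0$ subcase is excluded as in item~(2).

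\textbf{Main obstacle.} I expect the hardest part to be the bookkeeping: matching the exact parities of $n$ to the string shapes, handling leading zeros correctly, and ruling out the degenerate $n=0$ cases in items (2) and (3).
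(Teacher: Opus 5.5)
\textbf{Gap: excluding the degenerate shapes.} Your treatment of items (1) and (2) follows the paper, combining Lemma~\ref{lem:increasedecrease} with Remark~\ref{rem:AlphaBeta}, but the step that eliminates the degenerate shapes does not work as proposed. In item (2) you must exclude that the shortest representation is $(zp)^\beta$ or $(pz)^\alpha$. In item (3) you must exclude $(zp)^\beta$, $(pz)^\alpha$ and $zz(pz)^\alpha$.

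Both exclusions need the \emph{converse} of item (1): for these strings, $h\vect{a-1}{b}$ and $h\vect{a+1}{b}$ are both strictly larger than $h\vectab$. Lemma~\ref{lem:increasedecrease} only gives necessary conditions. Checking that a string avoids the local $zz$/$pp$ patterns only shows that one specific replacement fails to give $\vect{a\mp1}{b}$ at the same length. It says nothing about other words of that length, so you get no contradiction with $h\vect{a-1}{b}\le h\vectab$ (item (2)) or $h\vect{a+1}{b}\le h\vectab$ (item (3)).

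What is missing is a global count via Lemma~\ref{lem:ab2}. A word of length at most $2\beta-1$ has at most $\beta$ even positions. So second coordinate $\beta>0$ forces $p$ on every even position and on no odd one, i.e.\ the word is $p(zp)^{\beta-1}$, which represents only $\vect{\beta-\beta^2}{\beta}$. Similarly, a word of length at most $2\alpha$ with second coordinate $-\alpha$ must be $(pz)^\alpha$. Hence $\vect{a\pm1}{b}$ need strictly longer words. The paper passes over this point quickly too, but your proposed mechanism cannot supply it.

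A minor point: in the $pp$ case of item (2) there is no leading $z$ to strip, since $(pz)^\alpha pp(zp)^\beta$ starts with $p$. Its length $2(\alpha+\beta+1)$ is exactly $2(n+|b|)$ with $n=2\alpha+1$ (resp.\ $2\beta+1$).

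\textbf{Item (3): indices and comparison with the paper.} The indices you predict are each one too large. Lemma~\ref{lem:ab2} gives $[(zp)^\beta(pz)^\alpha]_M=\vect{\alpha^2-2\alpha\beta-\beta^2+\beta}{\beta-\alpha}$. This equals $-b-A_{-b,n}$ for $n=2\beta-1$ if $b\le 0$ and $n=2\alpha-1$ if $b>0$. The $zz$ variant gives $n=2\beta$ resp.\ $2\alpha$. Only these values make the stripped lengths $2(\alpha+\beta)-1$ and $2(\alpha+\beta)+1$ agree with $2(n+|b|)+1$. With your values the length identity fails. The exceptional case $n=0$, $b>0$ is precisely the word $(zp)^\beta zz$.

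Once corrected, your direct computation is a valid alternative route. The paper avoids this bookkeeping by rewriting
$(zp)^\beta(pz)^\alpha=z(pz)^{\beta-1}pp(zp)^{\alpha-1}z$ and $(zp)^\beta zz(pz)^\alpha=z(pz)^\beta(zp)^\alpha z$.
Hence $\vectab=M\vect{-a-b}{-b}$ with $h\vectab=h\vect{-a-b}{-b}+1$. Applying item (2) (or item (1) when $\alpha=0$) to $\vect{-a-b}{-b}$ then gives $a=-b-A_{-b,n}$ and the odd length immediately.
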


\begin{proof} We apply   Lemma \ref{lem:increasedecrease}. 
\begin{enumerate}
    \item By both items of the lemma, the unique representation of $\vectab$ equals  either  $(pz)^\alpha$ or $(zp)^\beta$.
In the former case, $b=-\alpha\leq 0$ and $a=\alpha^2 = b^2 =A_{b,0}$.   
In the latter case, $b = \beta\geq 0$ and $a = -\beta^2 +\beta = -b^2 +b =A_{b,0}$.

\item Lemma \ref{lem:increasedecrease} and the previous item imply that there exist $\alpha, \beta \in \mathbb{N}$ such that $b =\beta-\alpha$ and the unique representation of $\vectab$ is either of the form $(pz)^\alpha(zp)^\beta$ with positive $\alpha, \beta$ or of the form  $(pz)^\alpha pp (zp)^\beta$ with $\alpha, \beta \geq 0$.  

By Remark \ref{rem:AlphaBeta}, in the former case,  $a= A_{b,n}$ with  $n=2\alpha$ if $b=\beta-\alpha\geq 0$  or with $n =2\beta$ if $b=\beta - \alpha<0$. Both options lead to the equality
$2(\alpha + \beta) =2(n+|b|) = h\vectab$.   

In the latter case,  $a= A_{b,n}$ with  $n=2\alpha+1$ if $b=\beta-\alpha\geq 0$  or with $n =2\beta+1$ if $b=\beta - \alpha<0$. Now, 
$h\vectab=2(\alpha + \beta+1) =2(n+|b|) $.   
\item  Lemma  \ref{lem:increasedecrease} and Item \eqref{it:4.5 1} of the corollary say that the unique representation of $\vectab$ has one of the forms: 
$(zp)^\beta(pz)^\alpha$ with  $\alpha, \beta \geq 1$ or  $(zp)^\beta zz(pz)^\alpha$ with   $\alpha\geq 0$ and $\beta \geq 1$. 
The strings can be rewritten into the form 
\[
(zp)^\beta(pz)^\alpha = z(pz)^{\beta-1}pp(zp)^{\alpha-1}z\] and  
\[(zp)^\beta zz(pz)^\alpha=z(pz)^\beta (zp)^{\alpha}z.\]

Item \eqref{it:4.2 1} of Lemma \ref{lem:increasedecrease} allows us to deduce that $\vectab = M\vect{c}{d}$ for some $\vect{c}{d} \in \mathbb{Z}^2$  satisfying $h\vect{c}{d} < h\vect{c+1}{d}$. Obviously, $c=-a-b$, $d=-b$ and $ h\vectab = h\vect{c}{d}+1$.

If $\alpha\geq 1$ in both strings, then the form of strings gives moreover $ h\vect{c-1}{d} \leq  h\vect{c}{d}$ and we apply Item \eqref{it:4.5 2} of the corollary to deduce that  $c= A_{d,n}$ for some $n \geq 1$  and thus $a= -b-A_{-b,n}$. 

If $\alpha =0$, then the string $z(pz)^{\beta}z$ represents the vector   $\vectab=\vect{-\beta^2-\beta}{\beta} = \vect{-\beta -A_{-\beta, 0}}{\beta} = M \vect{\beta^2}{-\beta}$ and thus by Item \eqref{it:4.5 1} of the corollary, 
$h\vectab = 1+ h\vect{\beta^2}{-\beta} = 2|b| +1$. 
\end{enumerate}
\end{proof}

Fix $b \in \mathbb{Z}$. By the previous corollary,   the only integers $a$ where the values $h\vectab$ change are: $A_{b,n}$ and $-b - A_{-b,n}$. 
Let us stress that the sequence $A_{b,n}$ is strictly increasing in $n$ and $-b - A_{-b,n}$ strictly decreasing. Moreover, $ -b-A_{-b,0} < A_{b,0}$ if $b> 0$ and  $ -b-A_{-b,1} < A_{b,0}$ if $b \leq  0$. 
We can conclude that $h\vectab$  is constant   for $a$ satisfying one of these conditions
\begin{enumerate}
\item   $ A_{b,n-1} < a \leq A_{b,n}$  and $n \geq 1$; 
\item   $-b - A_{-b, n}\leq a<  -b - A_{b,n-1}$ and $n \geq 2$;  
\item   $-b - A_{-b, 0}\leq a<  A_{b,0}$ and $b >0$; 
\item   $-b - A_{-b, 1}\leq a<  A_{b,0}$ and $b \leq 0$. 
\end{enumerate}
Using the explicit values of $A_{b,n}$, we can determine the length of the minimal representation.  The result is summarized in the following theorem.

\begin{theorem} \label{thm:minus1} Let $a, b \in \mathbb{Z}$.
\begin{enumerate}
\item  If $b> 0$ and  $a = -b^2+b$, then  $h\vectab = 2b-1$. 
\item If  $b\leq 0$ and  $a = b^2$, then   $h\vectab = 2|b|$. 
    \item If  $b\geq 0$ and $-b^2+b\leq a  $, then  $h\vectab = 2(b+n)$, where  $n\in \N $ is minimum  such that $a\leq -b^2+b +\frac{1}{2}n(n+1)$.
\item If  $b> 0$ and $ a < - b^2+b $, then  $h\vectab = 2(b+n)+1$, where $n\in \N $ is minimum  such that $a\geq - b^2-b-2nb- \frac{1}{2}n(n+1)$. 
\item If  $b<  0$ and $ b^2<a  $,  
    then 
    $h\vectab =2(n-b)$, where $n\in \N $ is minimum  such that $a\leq b^2 + \frac{1}{2}n(n+1) - 2nb$. 
\item If  $b\leq   0$ and $ a < b^2  $, then $h\vectab=2(n-b)+1$, where $n\in \N $ is minimum  such that $a\geq b^2 - \frac12{n(n+1)}$. 
\end{enumerate}
\end{theorem}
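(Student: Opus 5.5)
Fix $b$ and regard $a\mapsto h\vectab$ as a function on $\mathbb{Z}$. By the preceding corollary, the only places where this function strictly increases from $a$ to $a+1$ are $a=A_{b,n}$. The only places where it strictly increases from $a$ to $a-1$ are $a=-b-A_{-b,n}$. Between consecutive such points $h$ is therefore constant, which is exactly the list of intervals (1)--(4) stated before the theorem. So my plan is to compute the value of $h$ at each break point from the corollary and then read off the interval containing a given $a$.

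First I handle the two isolated minima, items (1) and (2). These are Item \eqref{it:4.5 1} of the corollary, with $A_{b,0}=b-b^2$ for $b\ge 0$ and $A_{b,0}=b^2$ for $b<0$. For $b=0$ both formulas give $a=0$ and $h=0$, consistent with item (2).

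Next I treat the right side, $a\ge A_{b,0}$ (items (3) and (5)). On $A_{b,n-1}<a\le A_{b,n}$ the value $h\vectab$ equals $h\vect{A_{b,n}}{b}$. Since $h\vect{A_{b,n}-1}{b}\le h\vect{A_{b,n}}{b}$ there for $n\ge1$, Item \eqref{it:4.5 2} gives $h=2(n+|b|)$. Hence for $a>A_{b,0}$, $h=2(n+|b|)$ with $n\ge 1$ minimal such that $a\le A_{b,n}$. Substituting the definition of $A_{b,n}$ yields item (3) for $b\ge 0$, namely $A_{b,n}=-b^2+b+\tfrac12n(n+1)$, and item (5) for $b<0$, namely $A_{b,n}=b^2+\tfrac12 n(n+1)-2nb$. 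When $a=A_{b,0}$, allowing $n=0$ recovers $h=2b$ in item (3). I need to double-check this case against item (1), since item (1) gives $2b-1$ for $b>0$ and so item (3) should really concern $a>-b^2+b$ in that case. Item (5) only covers $a>b^2$, so no such clash arises there.

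Then I treat the left side, $a<A_{b,0}$ (items (4) and (6)), using Item \eqref{it:4.5 3}. On the interval $-b-A_{-b,n}\le a<-b-A_{-b,n-1}$ the constant value is $h\vect{-b-A_{-b,n}}{b}=2(n+|b|)+1$. I must check that the first interval on the left abuts $A_{b,0}$ correctly. For $b>0$ the point $n=0$ is admissible, and $-b-A_{-b,0}=-b-b^2$ (using the $b<0$ branch for $-b$). For $b\le 0$ the first point is $n=1$, and $-b-A_{-b,1}=-b+1-b-b^2=b^2-1$ after using $-b\ge 0$. This is the stated fact that these lie below $A_{b,0}$ with nothing in between. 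Expanding $-b-A_{-b,n}$ gives $-b^2-b-2nb-\tfrac12n(n+1)$ for $b>0$, which is item (4). For $b\le 0$ it gives $b^2-\tfrac12 n(n+1)$, which is item (6); here the minimal $n$ is automatically $\ge1$ because $a<b^2$.

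The main obstacle is the bookkeeping at the junctions. I have to confirm that the break-point sequences are strictly monotone, so that ``minimal $n$'' picks out the right interval. I also have to confirm that no value of $a$ lies between the last left break point and $A_{b,0}$, and that the sign conventions in $A_{-b,n}$ are applied correctly when $b$ changes sign. I would verify monotonicity directly from $A_{b,n}-A_{b,n-1}=n$ for $b\ge0$ and $n-2b>0$ for $b<0$, and finally sanity-check small cases such as $b=1$, $a=3$, where the predicted length is $6$, matching $pzzpzp$.
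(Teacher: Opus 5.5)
Your plan is the paper's argument: locate the break points via the corollary, note that $h$ is constant between them, and substitute the explicit $A_{b,n}$. Your break-point formulas are correct. Your suspicion at the junction is also justified. For $b>0$ and $a=-b^2+b$, item (3) would give $2b$, but the true value is $2b-1$; for example, $\vect{0}{1}=[p]_M$ has length $1$, not $2$. So (3) must be read with $a>-b^2+b$ when $b>0$, and you should state this outright rather than leave it as something to double-check. There is also a small slip: $-b-A_{-b,1}=-b-(1-b-b^2)$, not $-b+1-b-b^2$, although your result $b^2-1$ is right.

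There is, however, one step that does not follow as written; the paper leaves it implicit too. The corollary only goes one way. It tells you that a jump of $a\mapsto h\vectab$ can occur only at some $A_{b,n}$ or $-b-A_{-b,n}$, and what $h$ equals \emph{if} a jump occurs there. It does not say that $h$ actually jumps at every such point.

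When you invoke Item (2) at $a=A_{b,n}$, you check $h\vect{A_{b,n}-1}{b}\le h\vect{A_{b,n}}{b}$ but never the other hypothesis, $h\vect{A_{b,n}}{b}<h\vect{A_{b,n}+1}{b}$. The same omission occurs for Item (3) at $-b-A_{-b,n}$ and for Item (1) at $A_{b,0}$. If some $A_{b,n}$ were not a jump, the value on $(A_{b,n-1},A_{b,n}]$ would be the value at a later break point, not $2(n+|b|)$.

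The repair is short.
\begin{enumerate}
\item[(i)] The strings $(pz)^\alpha(zp)^\beta$ and $(pz)^\alpha pp(zp)^\beta$ of Remark~\ref{rem:AlphaBeta} give $h\vect{A_{b,n}}{b}\le 2(n+|b|)$ for $n\ge1$. Appending $z$ multiplies the represented vector by $M$, and $M\vect{A_{-b,n}}{-b}=\vect{-b-A_{-b,n}}{b}$. Hence $h\vect{-b-A_{-b,n}}{b}\le 2(n+|b|)+1$ for $n\ge1$, and also for $n=0$ when $b>0$, using $(pz)^{b}$.
\item[(ii)] Since there are only finitely many words of each length, $h\vectab\to\infty$ as $|a|\to\infty$. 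So genuine jumps occur arbitrarily far out on both sides.
\end{enumerate}

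Now suppose $A_{b,n}$ with $n\ge1$ were not a jump, and let $A_{b,m}$ with $m>n$ be the next genuine jump. Then $h$ is constant on $[A_{b,n},A_{b,m}]$, so Item (2) gives $h\vect{A_{b,n}}{b}=2(m+|b|)>2(n+|b|)$, contradicting (i). The left side is handled identically with Item (3); its hypothesis $h\vectab\ge h\vect{a+1}{b}$ holds because $h$ is non-increasing on $(-\infty,A_{b,0}]$.

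For items (1) and (2) it is simplest to argue directly from Lemma~\ref{lem:ab2}. If $b>0$, at least $b$ even positions must carry $p$, which forces length at least $2b-1$. If $b<0$, at least $|b|$ odd positions must carry $p$, which forces length at least $2|b|$. These bounds are attained by $p(zp)^{b-1}$ and $(pz)^{|b|}$ respectively. With these additions your argument is complete.
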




\subsection{The minimal weight representation} 

By $\wt\vectab$ we denote the minimal number of non-zero digits occurring in a representation of $\vectab$. We note that for all $a, b \in \mathbb{Z}$
\begin{equation}\label{eq:upper}
\wt M \vectab \leq \wt\vectab. 
\end{equation}
Noting that $M$ is invertible over the integers, we further get
\begin{equation}\label{eq:upper inv}
\wt \vectab = \wt M M^{-1} \vectab \leq \wt M^{-1}\vect ab.
\end{equation}
 
From Lemma \ref{lem:ab2} we easily deduce  that every representation $\sum_{i=0}^k M^i d_i $ of $\vectab$ has the weight at least $|b|$. 
We start our discussion  with identifying  the vectors with the smallest possible weight.

\begin{lemma}\label{lem:minimalWeight} For $a, b \in \mathbb{Z}$ we have $\wt\vectab = |b|$ if and only if  one of these three situations occurs
\begin{enumerate}
    \item $a=b=0$;
    \item  $a$ even, $a\leq - b(b-1)$ and $b>0$;
    \item  $a+b$ even, $a\geq b^2 $ and $b<0$.   
\end{enumerate}
\end{lemma}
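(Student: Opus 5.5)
The plan is to reduce the statement to a counting problem about positions of the digit $p$, using Lemma~\ref{lem:ab2}. Let $d_{k-1}\cdots d_0$ be a representation of $\vectab$ and write $E$ and $O$ for the sets of even and odd indices $i$ with $d_i=p$. Then Lemma~\ref{lem:ab2} gives $b=\#E-\#O$ and $a=\sum_{i\in O} i-\sum_{i\in E} i$, while the weight of the representation is $\#E+\#O$. Hence the weight equals $|b|$ exactly when one of $E$, $O$ is empty. This shows, as already noted in the text, that $|b|$ is a lower bound. It also shows that $\wt\vectab=|b|$ holds if and only if $\vectab$ admits a representation in which all occurrences of $p$ sit at positions of a single parity, namely even positions if $b>0$ and odd positions if $b<0$.

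The proof then splits according to the sign of $b$.

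\emph{Case $b=0$.} Weight $0$ means every digit is $z$, so the represented vector is $\vect00$. Conversely, $\vect00$ is represented by the empty word or by $z$. This gives situation (1).

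\emph{Case $b>0$.} A weight-$|b|$ representation corresponds to a choice of $b$ distinct even positions $2j_1<\dots<2j_b$, and then $a=-2\sum_r j_r$. So the attainable values of $a$ are the negatives of sums of $b$ distinct nonnegative even integers. The minimal such sum is $0+2+\dots+2(b-1)=b(b-1)$. Every even integer $s\ge b(b-1)$ is attained: start from the minimal configuration and increase the largest position $2(b-1)$ by $s-b(b-1)$. Every sum is even. Hence the attainable values of $a$ are exactly the even $a\le -b(b-1)$, which is situation (2).

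\emph{Case $b<0$.} The $|b|$ occurrences of $p$ lie at distinct odd positions, and $a$ is their sum. A sum of $|b|$ odd numbers has the parity of $|b|$, that is, of $b$, so $a+b$ is even. The minimal sum is $1+3+\dots+(2|b|-1)=b^2$. Every value $a\ge b^2$ with $a\equiv b \pmod 2$ is attained by raising the largest position by $a-b^2$, which is an even amount. This is situation (3).

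There is no real obstacle here; the only point needing care is the equivalence between weight $|b|$ and the single-parity condition on positions. Both directions of that equivalence follow directly from the two identities of Lemma~\ref{lem:ab2}. For the sufficiency direction, I would explicitly write down the word whose digit $p$ sits exactly at the chosen positions and whose remaining digits are $z$. Such a word has length one more than the largest chosen position.
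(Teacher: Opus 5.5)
Your proof is correct. For $b\ge 0$ it is essentially the paper's argument. A weight-$b$ representation with $b>0$ is a sum of $b$ terms $M^{2i_j}p$, which forces $a$ even and $a\le -b(b-1)$. For the converse, the paper puts $p$ at positions $0,2,\dots,2(b-2)$ and at one large even position, which is your ``raise the largest position'' construction.

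The routes differ for $b<0$. You simply redo the count with odd positions. The paper instead reduces to the case $b>0$ by multiplying by the base and using $\wt M\vect{a}{b}\le\wt\vect{a}{b}$. That reduction reuses case (2) and anticipates the $M^{\pm1}$ device of the next theorem, but it needs care: the inequality transfers weight-$|b|$ representations in one direction only. As written, with $M\vect{a}{b}=\vect{-a+b}{-b}$, it gives only the weaker necessary condition $a\ge b^2+2b$ and no sufficiency. For example, $\vect{-1}{-1}$ satisfies that condition, but its weight is not $1$.

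The reduction does work with $M^{-1}$ instead. Sufficiency follows from $\wt\vect{a}{b}\le\wt M^{-1}\vect{a}{b}$ together with case (2) applied to $\vect{-a-b}{-b}$, which yields exactly $a+b$ even and $a\ge b^2$. Necessity needs one extra remark, immediate from your parity analysis: a weight-$|b|$ representation with $b<0$ ends in $z$. Deleting that digit gives a representation of $M^{-1}\vect{a}{b}$ of the same weight. Your direct, symmetric count avoids all of this bookkeeping.
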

\begin{proof}\ 
\begin{enumerate}
    \item Only the representation consisting of the digits $\vect 00$ has a weight of 0.
 i.e., $\vectab  = \vectz$.  
\item If $b>0$ and $\wt\vectab = b$,  then $$\vectab  =  \sum_{j=0}^{b-1} M^{2i_j} \,p = \sum_{j=0}^{b-1} \vect{-2i_j}{1},$$ where $0\leq i_0 < i_1<\cdots < i_{b-1}$. Hence  $a$ is even and 
  $$a=-2(i_0+i_1+\cdots+ i_{i_{b-1}})\leq -2\sum_{j=0}^{b-1} j= -b(b-1) .$$ 
  On the other hand, if  $a\leq -b(b-1) $ and $a$ is even, then  the number    $B:=a+(b-1)(b-2)$ is even and  $B\leq-2(b-1)$ thus $M^B p + \sum_{k=0}^{b-2} M^{2k} \,p$ is a representation of $\vectab$ of the weight $b$. 
\item If $b<0$ and  $\wt\vectab = |b|$, then $\vect{a'}{b'}:=M\vectab = \vect{-a+b}{-b}$ also has a representation of weight $|b|$. As $b'>0$ we apply the previous item to deduce that a representation of $\vectab$  has weight $|b|$ if and only if 
$a' =-a+b$ is even  and $ a'= -a+b \leq  -b'(b'-1) = b(-b-1)$. Or equivalently,  $a+b$ is even and $b^2\leq a$. 
\end{enumerate}
\end{proof}

\begin{theorem} The minimal weight representation of every vector $\vectab \in \mathbb{Z}^2$ satisfies 
$$
\wt\vectab \in \{|b|, |b|+2, |b|+4\}.
$$
\end{theorem}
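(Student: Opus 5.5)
The plan is to combine a parity argument with an explicit construction, after reducing to $b\geq 0$ via the matrix $M$. Two facts come first. \textbf{Parity.} By Lemma~\ref{lem:ab2}, if a representation has $e$ digits $p$ at even positions and $o$ digits $p$ at odd positions, then $b=e-o$ while the weight is $e+o$. Hence the weight is congruent to $b$ modulo $2$. Since the weight is also at least $|b|$, every weight lies in $\{|b|,|b|+2,|b|+4,\dots\}$. So it suffices to exhibit, for each $\vectab$, a representation of weight at most $|b|+4$. \textbf{Reduction to $b\ge 0$.} Inequalities \eqref{eq:upper} and \eqref{eq:upper inv} together give $\wt M\vectab=\wt\vectab$. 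Since $M\vectab=\vect{-a+b}{-b}$, the case $b<0$ reduces to the case $-b>0$ with the same $|b|$.

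For $b\geq 0$ the construction has two parts. First take the weight-$b$ string with digits $p$ at positions $0,2,\dots,2(b-1)$. It represents $\vect{a_0}{b}$ with $a_0=-b(b-1)$, and it is the empty string when $b=0$. Next observe what a pair of digits $p$ contributes: one at an even position $2i$ and one at an odd position $2j+1$ together add $M^{2i}p+M^{2j+1}p=\vect{-2i}{1}+\vect{2j+1}{-1}=\vect{2j+1-2i}{0}$. This yields any odd integer $c$ in the first coordinate. Moreover, both $i$ and $j$ can be chosen arbitrarily large, since only their difference is fixed.

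Now put $c=a-a_0$. If $c$ is odd, append one such pair with $i,j>b$, so that its positions are disjoint from $\{0,2,\dots,2(b-1)\}$. The resulting string represents $\vectab$ and has weight $b+2$. If $c$ is even, write $c=c_1+c_2$ with $c_1,c_2$ odd, for instance $c_1=1$ and $c_2=c-1$. Then append two pairs, choosing their positions large and pairwise distinct; the second pair is shifted far beyond the first. This gives weight $b+4$. Because the positions are distinct, the sum of the $M^{k}p$ over the chosen positions $k$ is a genuine representation, with digit $z$ at all other positions. Its value is $\vect{a_0+c}{b}=\vectab$.

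The only point needing care is this disjointness of positions: the pair positions must avoid the initial block and each other. It is handled by choosing $i$, and therefore $j=i+(c_r-1)/2$ for the relevant odd $c_r$, large enough for each pair in turn. Combining the parity constraint with the upper bound $|b|+4$ gives $\wt\vectab\in\{|b|,|b|+2,|b|+4\}$.
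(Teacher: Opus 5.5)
Your strategy works, but one step is false as written. You claim that \eqref{eq:upper} and \eqref{eq:upper inv} together give $\wt M\vectab=\wt\vectab$. They do not. The two inequalities are the same statement (put $u=M^{-1}v$ in the second one), so together they only give $\wt Mv\le\wt v\le\wt M^{-1}v$. In fact the weight is not $M$-invariant.

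Take $v=\vect{-1}{-1}$. Weight $1$ is impossible: a single $p$ at an even position gives second coordinate $1$, and at an odd position $k$ it gives $\vect{k}{-1}$ with $k\ge 1$. Weight $3$ would force one $p$ at an even position and two at odd positions, which makes the first coordinate even. So $\wt v=5$, attained e.g.\ by $p$ at positions $1,3,4,5,6$. Yet $Mv=\vect{0}{1}=p$ has weight $1$. Your reduction relies on exactly the false direction $\wt v\le\wt Mv$.

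The repair is immediate. Reduce via $M^{-1}$ instead: $\wt v\le\wt M^{-1}v$ is true, and $M^{-1}\vectab=\vect{-a-b}{-b}$ again has second coordinate $-b>0$. Alternatively, drop the reduction and, for $b<0$, start from $p$ at the odd positions $1,3,\dots,2|b|-1$, which represents $\vect{b^2}{b}$. Then append pairs exactly as you do.

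With that fix the proof is correct, and it takes a different route from the paper. The paper handles the four cases not covered by Lemma~\ref{lem:minimalWeight} one by one and determines $\wt\vectab$ exactly in each. It uses \eqref{eq:GammaBetaAlpha} together with Lemma~\ref{lem:minimalWeight} for lower bounds, and explicit strings plus the sandwich $\wt Mv\le\wt v\le\wt M^{-1}v$ for upper bounds.

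You use only the global facts $\wt\vectab\ge|b|$ and $\wt\vectab\equiv b$ modulo~$2$, plus one uniform construction of weight at most $|b|+4$. This is shorter and needs no characterization of the vectors of weight $|b|$. However, it proves only membership in the three-element set. The paper's case analysis also tells you which of $|b|$, $|b|+2$, $|b|+4$ occurs for each $\vectab$.
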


\begin{proof}  Let $d_{k-1} \dots d_0$ be a representation of $\vectab$.
Denote 
 $$    \gamma =\#\{i:d_i=p\}\,,\quad 
     \alpha= \#\{i:d_{2i}=p\}\,\ \quad   \text{and} \quad   
  \beta=\#\{i:d_{2i+1}=p\}.   
    $$
Clearly, $\gamma =\alpha+\beta$. 
    
Noting that $M^{2i} p = 
\vect{-2i}{1}$ and $M^{2i+1} p = 
\vect{2i+1}{-1}$, we derive 
\begin{equation}\label{eq:GammaBetaAlpha}
 b = \alpha - \beta, \quad  a=\beta \!\!\mod 2 \quad  \text{and}\quad  a+b= \alpha \!\!\mod 2
\end{equation}
It is enough to discuss cases not included in Lemma \ref{lem:minimalWeight}.  
\begin{enumerate}
\item Case $b\geq 0$ and $a$ odd: \quad  We show that  $\wt\vectab=b+2$. \\
Let us choose $A\in\mathbb{N}$ large enough so that the number 
 $B = a+(b+1)(2A+b)\geq 0$. Obviously, $B$ is odd. As 
$M^B p + \sum_{i=A}^{A+b}M^{2i}p  = \sum_{i=A}^{A+b}\vect{-2i}{-1} + \vect{B}{-1} = \vect{B-(b+1)(2A+b)}{b}  = \vectab$,
we have found a representation of  $\vectab$ which has $b+2$ non-zero digits. 

On the other hand,  by Equation \eqref{eq:GammaBetaAlpha},  $\beta=1 \mod 2$  for every representation of   $\vectab$. Hence  $\beta\geq 1$ and $\gamma = \alpha+\beta = b+ 2\beta \geq b+2$. Hence, if $b\geq 0$ and $a$ is odd, then the weight of no representation can be less than $b+2$. 
\label{part:1}
  \item Case $b< 0$ and $a+b$ odd:  \quad  We show that  $\wt\vectab=|b|+2$. \\
Consider  
\[ \vect{a'}{b'} = M\vectab = \vect{-a+b}{-b} \ \ \text{and}  \ \ \vect{a''}{b''} = M^{-1}\vectab = \vect{-a-b}{-b}.\] 
By Equations \eqref{eq:upper} and \eqref{eq:upper inv},  
$$\wt\vect{a'}{b'} =  \wt\, M\vectab  \leq  \wt\vectab \leq \wt\,M^{-1}\vectab = \wt\vect{a''}{b''}.$$
Since $b'=b'' = |b|>0$ and both $a'$ and $a''$ are odd, Case \eqref{part:1} implies  $\wt\vect{a'}{b'}=\wt\vect{a''}{b''}=|b|+2$. This forces $ \wt\vectab =|b|+2$.  
\item Case $b\geq 0$, $a$ even, $a<-b(b-1)$:\quad We show  $\wt\vectab=b+4$. \\
By Equation \eqref{eq:GammaBetaAlpha} and Lemma \ref{lem:minimalWeight},  $\beta = a \!\!\mod 2$  and  $b+1 \leq \gamma = b+2\beta$. This implies  $\beta \geq 2$ and therefore $\gamma \geq b +4$.    
Clearly, 
$$ \vectab  = \vect{a-1}{b} + M^{2B+1}p + M^{2B}p
$$
for any $B \in \mathbb{N}$. By Case \eqref{part:1}, $\vect{a-1}{b}$ has a representation of weight $b+2$. Consequently,  $\vectab$  has a representation of weight $b+4$. 
\label{case:3}
\item Case $b<0$, $a+b$ even, $a>b^2$:\quad We show  $\wt\vectab=|b|+4$. \\[1mm]
By  Equation \eqref{eq:GammaBetaAlpha} and Lemma \ref{lem:minimalWeight},  $\alpha = a+b \!\!\mod 2$  and  $|b|+1 \leq \gamma = \alpha+\beta = 2\alpha +|b|$. This implies  $\alpha \geq 2$ and therefor $\gamma \geq b +4$. 

Note that the  coordinates of  $\vect{a'}{b'} :=M^{-1}\vectab = \vect{-a-b}{-b}$  satisfy $a' =-a-b \geq -b'(b'-1) =-b(b+1) $ and $a'$ even.  Case \eqref{case:3} guaranties that the vector  $\vect{a'}{b'}$ has a representation of weight $|b'|+4=|b|+4$.  By \eqref{eq:upper}, the same is true for $\vectab$.
\end{enumerate}
\end{proof}


\subsection{The number of representations}
\label{sec:J-1 generating}

Similarly to Section \ref{asec:J1 generating} we can get the number of representations of $\vectab$ of length $k$ to be the coefficient of $x^a t^b$ of
\[ \prod_{i=0}^{k-1}\left( \left(\frac{x^i}{t}\right)^{(-1)^i} + 1\right).\]

\section{Lengths of representations}
\label{sec:lenghts}

In Section \ref{sec:J1} and \ref{sec:Jm1} we provided an algorithm to find the minimal length representation with base $J_2(1)$ and $J_2(-1)$ respectively.
In this section we provide a general observation about the length of such representations.
Let $\| (a_1, \dots, a_n)^T \| = \max |a_i|$ be the $\ell_\infty$ norm.

\begin{prop}\label{prop:BoundedEntries}
    Let $M$ be a square matrix similar to $J_n(1)$ or $J_n(-1)$ and $\mathcal{D} \in \mathbb{Z}^n$ be a finite set of vectors.
    Then there exists a constant $c$ such that for all $k$ and all $w \in \mathcal{D}^k$ we have $\| [w]_M \| \leq c k^n$.
\end{prop}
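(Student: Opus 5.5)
Since $M$ is similar to $J_n(\pm1)$, we can write $M = P J P^{-1}$ with $J = J_n(a)$, $a=\pm1$, and $P$ an invertible (real, in fact rational) matrix. Then $M^i = P J^i P^{-1}$, so for $w = d_{k-1}\cdots d_0 \in \D^k$,
\[
[w]_M = \sum_{i=0}^{k-1} M^i d_i = P\Bigl(\sum_{i=0}^{k-1} J^i P^{-1} d_i\Bigr).
\]
The plan is to bound the entries of $J^i$ polynomially in $i$ and then sum over $i<k$. Since $\|Px\|\le \|P\|_{op}\|x\|$ for the operator norm induced by the $\ell_\infty$ norm, constant factors from $P$ and $P^{-1}$ are harmless.

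The key step is the explicit form of $J^i$. Write $J = aI + N$ with $N$ the nilpotent superdiagonal shift. Since $aI$ and $N$ commute and $N^n=0$, the binomial theorem gives
\[
J^i = \sum_{r=0}^{n-1}\binom{i}{r} a^{i-r} N^r.
\]
Hence every entry of $J^i$ has absolute value at most $\binom{i}{r}$ for some $r\le n-1$, so at most $\max_{r\le n-1}\binom{i}{r}\le i^{n-1}$ for $i\ge1$, and at most $1$ for $i=0$. In the $\ell_\infty$ operator norm this gives $\|J^i\|\le n\max(1,i)^{n-1}$.

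Now set $D=\max_{d\in\D}\|P^{-1}d\|$, which is finite because $\D$ is finite. Then
\[
\|[w]_M\| \le \|P\|\sum_{i=0}^{k-1} n\max(1,i)^{n-1} D \le \|P\|\,n\,D\,k\cdot k^{n-1} = \|P\|nD\,k^n ,
\]
which gives the claim with $c=\|P\|nD$. The case $k=0$ is trivial, since the empty word represents $0$.

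The only real obstacle is bookkeeping: the entrywise binomial bound for $J^i$, and handling $i=0$ (where $0^{n-1}$ must be replaced by $1$) so that the sum is still bounded by $k^n$ rather than by something slightly larger. If tighter constants were wanted, one could use $\sum_{i<k}\binom{i}{r}=\binom{k}{r+1}$, but this is not needed for the statement.
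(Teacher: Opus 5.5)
Your proof is correct and follows essentially the paper's route: conjugate $M$ to $J_n(\pm1)$, bound the entries of $J^i$ by binomial coefficients $\binom{i}{r}$ with $r\le n-1$ (so polynomially of degree $n-1$), and sum $k$ such terms to get a bound of order $k^n$. Your bookkeeping is slightly more careful than the paper's: you handle the $i=0$ term explicitly and keep the factor $n$ from the row sums, which the paper's constant $c=HF$ omits.
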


\begin{proof}  Let $H = \max_{d \in \mathcal{D}} \| d \|$. 
Write  $M$ in the form  $M=P^{-1}JP$, where  $J$
 equals $J_n(1)$ or $J_n(-1)$. As entries of  $J^k$   are in modulus at most $k\choose{n-1}$, every entry of $M$ is bounded by a polynomial in variable $k$ of degree at most $n-1$. We can find a constant $F$ such that every entry of $M^k$ is in modulus bounded by  $F\,k^{n-1}$.  Hence the entries of the vector $M^id_i$  are bounded by $c\,k^{n-1}$ where $c = HF$.    Since $[w]_M= \sum_{i=0}^{k-1} M^{i}d_i$, components  of the vector $[w]_M$ are bounded by $c\,
k^n$. \end{proof}

\begin{coro}
    Let $M$ be a square matrix similar to $J_n(1)$ or $J_n(-1)$ and $\mathcal{D} \in \mathbb{Z}^n$ be a finite set of vectors of size at least 2.
    There exists a $\gamma > 0$ such that if $[w_0 \dots w_{k-1}]_M = (a_1, \dots a_n)^T =: \vec{a}$ then $k \geq \gamma \sqrt[n]{\| \vec{a}\|}$. 
\end{coro}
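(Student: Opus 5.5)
The corollary follows directly from Proposition \ref{prop:BoundedEntries} by inverting the polynomial bound. Let $c$ be the constant from that proposition, so that every word $w\in\D^k$ satisfies $\|[w]_M\|\le c\,k^n$. Given a word $w_0\dots w_{k-1}$ of length $k$ with $[w_0\dots w_{k-1}]_M=\vec a$, we get $\|\vec a\|\le c\,k^n$, i.e. $k\ge c^{-1/n}\sqrt[n]{\|\vec a\|}$. So we take $\gamma=c^{-1/n}$. We must make sure $c>0$, which holds as long as some digit is nonzero; this is where $\#\D\ge 2$ enters, since two distinct digits cannot both be zero.

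Two edge cases need a short check. First, if $\D$ contained only the zero vector, every word would represent $\vec 0$. In that case $c$ could be $0$ and the inequality would be vacuous or undefined, but the hypothesis $\#\D\ge 2$ excludes this; alternatively we may simply enlarge $c$ to be positive. Second, for $k=0$ the empty word represents $\vec 0$, and the inequality $0\ge\gamma\cdot 0$ holds trivially. For $k\ge 1$ the bound of Proposition \ref{prop:BoundedEntries} applies directly.

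I do not expect a real obstacle. The only point needing care is checking that the constant in Proposition \ref{prop:BoundedEntries} depends only on $M$ and $\D$, not on $k$ or $w$. This holds because in that proof $c=HF$, where $H=\max_{d\in\D}\|d\|$ and $F$ bounds the entries of $M^i$ through $F\,i^{n-1}$ uniformly in $i$. To make $F$ usable also for small $i$, including $i=0$ where $i^{n-1}$ may vanish, one can replace $i^{n-1}$ by $k^{n-1}$ for $i<k$ and take $F\ge 1$. With that settled, raising to the power $1/n$ completes the argument.
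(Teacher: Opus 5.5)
Your proposal is correct and is the paper's own argument: take the constant $c$ from Proposition~\ref{prop:BoundedEntries}, use $\|\vec a\|\le c\,k^n$, and set $\gamma=c^{-1/n}$. Your extra checks are details the paper leaves implicit, and you handle them correctly: that $c>0$, the empty word, and choosing $F\ge 1$ so that the bound on the entries of $M^i$ also covers $i=0$.
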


\begin{proof} Let $c$ be the constant from Proposition \ref{prop:BoundedEntries}.   
We have $\| \vec{a} \| = \| [w_1 \dots w_k]_M \| \leq c k^n$.
Dividing both sides by $c$ and taking $n$th roots gives $k \geq \sqrt[n]{1/c} \sqrt[n]{\|\vec{a}\|}$.
Setting $\gamma = \sqrt[n]{1/c}$ gives the result.
\end{proof}

\begin{remark}  
For $n=2$, Theorems \ref{thm:plus1} and \ref{thm:minus1} confirm that the asymptotic lower bound  $O(\sqrt{n})$ is met.
\end{remark}

\section{Final remarks} \label{sec:conc}

Consider $J_2(1)$ with digit set $\{\vectp, \vectm\}$ from Section \ref{sec:J1}.
Similarly to Section \ref{sec:Jm1}, we could define $\wt\vectab$ as the minimal number of $\vectp$ in a representation of $\vectab$.
As $b = \#\{i: d_i = p\} - \#\{i: d_i = m\}$, we see that the representation which achieves $\wt\vectab$ is the same as the representation that achieves the minimal length representation.

The case of $J_2(1)$ with digit set $\{\vectp, \vectz, \vectm\}$ is less clear with respect to various weight functions, and would be interesting to explore.

It would be also interesting to expand the description of optimal representations (with respect to both length and weight) in the case of $M=  J_n(-1) $ of general dimension $n$. 

By replacing base $J_n(1)$ with  $J_n(-1)$, we have obtained a system in which two digits are sufficient to represent all vectors from $\mathbb{Z}^n$. In the article \cite{CHV24},  it is shown that for some integer matrices $M$  similar to the Jordan block $J_n(1)$, it is necessary to use up to $n$ digits to obtain a full number system. 
We have not addressed an analogous question  for matrices similar to $J_n(-1)$ here at all.

\end{document}